\documentclass[12pt,a4paper]{article}

\RequirePackage{etex}
\usepackage[utf8]{inputenc}
\usepackage{amsmath, amssymb, amsthm, amsfonts, mathrsfs}
\usepackage{bbm, commath}
\usepackage{multicol}
\usepackage{graphicx}
\usepackage{tikz-network}
\usetikzlibrary {positioning,patterns, calc}
\usetikzlibrary{graphs,graphs.standard,quotes}
\usepackage{authblk}
\usepackage{enumitem}
\usepackage[colorlinks=true,linkcolor=black,citecolor=black]{hyperref}
\usepackage{tabularx}
\usepackage{array}

\textheight=24cm \textwidth=16cm \oddsidemargin=0.2in
\evensidemargin=0.2in \topmargin=-0.25in


\newtheorem{thm}{Theorem}

\newtheorem{prop}{Proposition}
\newtheorem{lem}{Lemma}
\newtheorem{cor}{Corollary}


\def \e {{\bf e}}

\def \e {{\mathbf e}}



\title{\it A Characterization of State Transfer on Double Subdivided Stars}
\author[1]{Sarojini Mohapatra}
\author[2]{Hiranmoy Pal}
\affil[1,2]{National Institute of Technology Rourkela, India-769008.}
\date{\today}


\begin{document}
	
	\maketitle

	
		\begin{abstract}
 A subdivided star $SK_{1,l}$ is obtained by identifying exactly one pendant vertex from $l$ copies of the path $P_3.$ This study is on the existence of quantum state transfer on double subdivided star $T_{l,m}$ which is a pair of subdivided stars $SK_{1,l}$ and $SK_{1,m}$ joined by an edge to the respective coalescence vertices. Using the Galois group of the characteristic polynomial of $T_{l,m},$ we analyze the linear independence of its eigenvalues which uncovers no perfect state transfer in double subdivided stars when considering the adjacency matrix as the Hamiltonian of corresponding quantum system. Then we establish a complete characterization on double subdivided stars exhibiting pretty good state transfer.\\\\
    {\it Keywords:} Spectra of graphs, Field extensions, Galois group, Perfect state transfer, Pretty good state transfer.  \\\\
    {\it MSC: 15A16, 05C50, 12F10, 81P45.}
	\end{abstract}

	\newpage
	\section{Introduction}
The transfer of quantum states between two different locations in a quantum network plays an important role in quantum information processing. Let a quantum network of $n$ interacting qubits be modeled by a graph $G$ where the vertices correspond to the qubits and the edges represent the interactions between qubits. Transfer of state among such qubits can be described using the continuous-time quantum walk operator acting on the characteristic vectors of the vertices. If the network admits a transfer of quantum state between two qubits without any loss of information, then this phenomenon is called perfect state transfer (PST). The main objective here is to identify quantum networks which enable high probability state transfer between qubits. We consider state transfer with respect to the adjacency matrix of a graph $G$ having the vertex set $\{a_1,a_2,\ldots,a_n\}.$ The adjacency matrix $A=[a_{jk}]$ is the $n\times n$ matrix having $a_{jk}=1$ if there is an edge between $a_j$ and $a_k$, otherwise $a_{jk}=0$. The continuous-time quantum walk on $G$ relative to the adjacency matrix $A$ is defined by \[U(t):=\exp{(itA)},~\text{where}~t\in\mathbb{R}~\text{and}~i=\sqrt{-1}.\]
Farhi and Gutmann \cite{farhi} first used the method of continuous-time quantum walks in analysing various quantum transportation phenomena. One can observe that the transition matrix $U(t)$ is symmetric as well as unitary. The square of the absolute value of $(a,b)$-th entry of $U(t)$ provide the probability of state transfer from site $a$ to site $b$ after time $t$. Suppose all the distinct eigenvalues of $A$ are $\theta_1,\theta_2,\ldots,\theta_d$. Let $E_{\theta_j}$ denote the orthogonal projection onto the eigenspace corresponding to $\theta_j.$  The spectral decomposition of the transition matrix $U(t)$ can be evaluated as
\[U(t)=\sum_{j=1}^{d}\exp{(it\theta_j)}E_{\theta_j}.\] 
Let $\e_a$ denote the characteristic vector corresponding to a vertex $a$ of $G$. The eigenvalue support of $a$ defined by $\sigma_a=\{\theta_j:E_{\theta_j}\e_a\neq0\}.$ The graph $G$ is said to exhibit PST between a pair of distinct vertices $a$ and $b$  if there exists $\tau\in\mathbb{R}$ such that 
\begin{equation}\label{equ1}
    U(\tau)\e_a=\gamma \e_b,~\text{for some}~\gamma\in\mathbb{C}.\end{equation}
It is now evident that the existence of PST between $a$ and $b$ depends only on the eigenvalues in the support $\sigma_a$ and the corresponding orthogonal projections. PST in quantum communication networks was first introduced by Bose in \cite{bose}. 
 There it shows that PST occurs between the end vertices of the path $P_2$ on two vertices.  
In \cite{chr1}, Christandl et al. proved that PST occurs between the end vertices of a path $P_n$ on $n$ vertices if and only if $n=2$, $3$. Remarkably, Ba\v{s}i\'{c} \cite{mil4} established a complete characterization of integral circulant graphs having PST. The existence of PST in several well-known families of graphs and their products are also investigated in \cite{ack1, ange1, pal9, che, cou7, pal1, pal2}, etc. Later, Coutinho et al. \cite{cou5} showed that there is no PST in a graph $G$ between two cut vertices $a$ and $b$ that are connected only by the path $P_2$ or $P_3$, unless the graph $G$ is itself $P_2$ or $P_3.$ This infers that the double subdivided star $T_{l,m}$ does not exhibit PST between the coalescence vertices of degree $l$ and $m$ for all positive integers $l$ and $m$. Here we show that $T_{l,m}$ does not exhibit PST between any pair of vertices for all such cases using the linear independence of the eigenvalues of $T_{l,m}$ in Section \ref{s3}.

In case $a=b$ in \eqref{equ1}, the graph $G$ is said to be periodic at the vertex $a$ with period $\tau$. If $G$ is periodic at every vertex with the same period, then it is called a periodic graph. It is well known that if there is PST between a pair of vertices $a$ and $b$ at time $\tau$ then $G$ is periodic at both $a$ and $b$ with period $2\tau$. Therefore periodicity at the vertex $a$ is necessary for the existence of PST from $a$. 
In what follows, we find that if $G$ is periodic at a vertex then it must satisfy the following ratio condition.
 \begin{thm}\label{p6}\cite{god3}
    Suppose a graph $G$ is periodic at vertex $a$. If $\theta_k,\theta_l,\theta_r, \theta_s$ are eigenvalues in the support of $a$ and $\theta_r\neq \theta_s$, then
    \[\dfrac{\theta_k- \theta_l}{\theta_r-\theta_s}\in\mathbb{Q}.\]
\end{thm}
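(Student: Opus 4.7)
The plan is to extract the ratio condition directly from the definition of periodicity by invoking the spectral decomposition of $U(\tau)$. Periodicity at $a$ with period $\tau$ means there exists $\gamma \in \mathbb{C}$ with $U(\tau)\e_a = \gamma\e_a$. Writing $\e_a = \sum_{\theta_j\in\sigma_a} E_{\theta_j}\e_a$ (which uses $\sum_j E_{\theta_j}=I$), and applying the spectral decomposition, I would rewrite this identity as
\[
\sum_{\theta_j\in\sigma_a} e^{i\tau\theta_j} E_{\theta_j}\e_a \;=\; \gamma \sum_{\theta_j\in\sigma_a} E_{\theta_j}\e_a .
\]

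The key structural observation is that the vectors $\{E_{\theta_j}\e_a : \theta_j \in \sigma_a\}$ are linearly independent because they lie in mutually orthogonal eigenspaces and each is nonzero by the definition of the eigenvalue support. Matching coefficients on both sides of the identity therefore forces $e^{i\tau\theta_j}=\gamma$ for every $\theta_j\in\sigma_a$. In particular, for any two eigenvalues $\theta_p,\theta_q$ in $\sigma_a$, we obtain $e^{i\tau(\theta_p-\theta_q)}=1$, which is equivalent to $\tau(\theta_p-\theta_q)\in 2\pi\mathbb{Z}$.

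Applying this conclusion to both pairs from the statement, there exist integers $n_{kl}$ and $n_{rs}$ with $\tau(\theta_k-\theta_l)=2\pi n_{kl}$ and $\tau(\theta_r-\theta_s)=2\pi n_{rs}$. Since $\theta_r\neq\theta_s$, the denominator $n_{rs}$ is nonzero, so dividing yields
\[
\frac{\theta_k-\theta_l}{\theta_r-\theta_s} \;=\; \frac{n_{kl}}{n_{rs}} \;\in\; \mathbb{Q},
\]
which is the desired conclusion.

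No step here is a serious obstacle, but the one that needs a careful word is the passage from the operator identity to the pointwise condition $e^{i\tau\theta_j}=\gamma$; I would emphasise that this uses both the orthogonality of the spectral idempotents and the fact that $E_{\theta_j}\e_a\neq 0$ precisely for $\theta_j\in\sigma_a$, so that no coefficient comparison is vacuous. Everything else is arithmetic manipulation of the two resulting integer multiples of $2\pi$.
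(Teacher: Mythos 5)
Your argument is correct and is essentially the standard proof of this result; the paper itself offers no proof, citing it from the literature, so there is nothing to diverge from. The spectral-decomposition step, the linear independence of the nonzero orthogonal vectors $E_{\theta_j}\e_a$, and the coefficient comparison forcing $e^{i\tau\theta_j}=\gamma$ on the support are all sound. One small point: your claim that $n_{rs}\neq 0$ follows from $\theta_r\neq\theta_s$ also needs $\tau\neq 0$, which holds because a period is by convention a nonzero (positive) time --- if $\tau=0$ were admitted, every vertex would be trivially periodic and the theorem would be false, so you should say explicitly that you are using $\tau\neq 0$ together with $\theta_r\neq\theta_s$ to get $\tau(\theta_r-\theta_s)=2\pi n_{rs}\neq 0$.
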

The existence of PST in graphs is a rare phenomena as observed in \cite{god2}, and consequently, the notion of pretty good state transfer (PGST) was introduced in \cite{god1, vin}. A graph $G$ is said to exhibit PGST between a pair of distinct vertices $a$ and $b$ if there exists a sequence $\tau_k$ of real numbers such that 
\[\lim_{k\to\infty}U(\tau_k)\e_a=\gamma\e_b,~\text{for some}~~ \gamma\in \mathbb{C}.\] In \cite{god4}, Godsil et al. showed that there is PGST between the end vertices of $P_n$ if and only if $n+1=2^t~\text{or}~p~\text{or}~2p$, for some positive integer $t$ and odd prime $p$. Moreover, if there is PGST between the end vertices of $P_n$, then it occurs between the vertices $a$ and $n+1-a$ as well, whenever $a\neq (n+1)/2.$ Further investigation is done in \cite{cou3} to determine infinite family of paths admitting PGST between a pair of internal vertices, where there is no PGST between the end vertices. Among other trees, PGST is investigated on double star \cite{fan}, $1$-sum of stars \cite{hou}, etc. Pal et al. \cite{pal4} showed that a cycle $C_n$ and its complement $\overline{C}_n$ admit PGST if and only if $n$ is a power of $2$, and it occurs between every pair of antipodal vertices. It is worth noting that PGST is not monogamous unlike PST as argued in \cite[Example 4.1]{pal5}. More results on PGST can be found in \cite{cou6, eis, pal6, pal7, bom1}, etc. Here we investigate the existence of PGST on double subdivided stars. A subdivided star with $l$ branches, denoted by $SK_{1,l},$ is obtained by identifying exactly one pendant vertex from $l$ copies of the path $P_3.$ A double subdivided star is formed by joining the coalescence vertices of a pair of subdivided stars $SK_{1,l}$ and $SK_{1,m}$ by an additional edge, and the resulting graph is denoted by $T_{l,m}.$ We analyze the linear independence of the eigenvalues of $T_{l,m}$ in Section \ref{s2} and then, in Section \ref{s3}, the existence of PGST in $T_{l,m}$ is investigated.  

A pair of vertices $a$ and $b$ in a graph $G$ are called strongly cospectral if $E_{\theta_j}\e_a=\pm E_{\theta_j}\e_b,$ for all eigenvalues $\theta_j$. Next we observe that strong cospectrality is necessary for the existence of PGST between a pair of vertices.
\begin{lem}\cite{god1}\label{l4}
If a graph $G$ exhibits pretty good state transfer between a pair of vertices $a$ and $b$, then they are strongly cospectral. 	
\end{lem}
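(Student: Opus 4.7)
The plan is to prove both directions of strong cospectrality, namely that $E_{\theta_j}\e_a = \pm E_{\theta_j}\e_b$ for every eigenvalue $\theta_j$, by applying each spectral idempotent to the defining limit of PGST. First I would record two easy normalizations: since $U(\tau_k)$ is unitary and $\e_a$ is a unit vector, the limit $\gamma\e_b$ has norm one, so $|\gamma|=1$. Also, because $A$ is real symmetric, each eigenprojection $E_{\theta_j}$ is a real symmetric matrix, and hence $E_{\theta_j}\e_a$ and $E_{\theta_j}\e_b$ are real vectors.

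Next, applying $E_{\theta_j}$ to the spectral decomposition $U(\tau_k)\e_a=\sum_{r}e^{i\tau_k\theta_r}E_{\theta_r}\e_a$ isolates a single term:
\[
E_{\theta_j}U(\tau_k)\e_a = e^{i\tau_k\theta_j}E_{\theta_j}\e_a \longrightarrow \gamma\, E_{\theta_j}\e_b.
\]
For $\theta_j\in\sigma_a$ the vector $E_{\theta_j}\e_a$ is nonzero, so the scalars $e^{i\tau_k\theta_j}$ themselves must converge, to some $\mu_j$ on the unit circle. This forces $E_{\theta_j}\e_b=(\mu_j/\gamma)E_{\theta_j}\e_a$. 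The main obstacle is ensuring that the scalar is $\pm 1$ and not some other unit complex number; this is exactly where the reality of $E_{\theta_j}\e_a$ and $E_{\theta_j}\e_b$ comes in. Since both sides are real and $E_{\theta_j}\e_a\neq 0$, the ratio $\mu_j/\gamma$ is real, and since $|\mu_j/\gamma|=1$ it equals $\pm 1$.

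It remains to handle the eigenvalues $\theta_j\notin\sigma_a$, where $E_{\theta_j}\e_a=0$; here I need to show $E_{\theta_j}\e_b=0$. The idea is to invert time: using unitarity of $U(\tau_k)$,
\[
\|U(-\tau_k)\e_b-\bar{\gamma}\e_a\| = \|\e_b-\gamma^{-1}U(\tau_k)\e_a\|\longrightarrow 0,
\]
so $U(-\tau_k)\e_b\to\bar{\gamma}\e_a$. Applying $E_{\theta_j}$ gives $e^{-i\tau_k\theta_j}E_{\theta_j}\e_b\to \bar{\gamma}E_{\theta_j}\e_a=0$, and taking norms yields $\|E_{\theta_j}\e_b\|=0$. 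Combining the two cases gives $E_{\theta_j}\e_a=\pm E_{\theta_j}\e_b$ for every $\theta_j$, as required.
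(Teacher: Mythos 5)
The paper does not prove this lemma; it is quoted from the cited reference without argument. Your proof is correct and self-contained, and it follows the standard route: apply each idempotent $E_{\theta_j}$ to the PGST limit to isolate $e^{i\tau_k\theta_j}E_{\theta_j}\e_a\to\gamma E_{\theta_j}\e_b$, extract the limiting phase $\mu_j$, and use the reality of the (real symmetric) idempotents together with $|\mu_j/\gamma|=1$ to force the scalar to be $\pm1$. One small streamlining: taking norms in that same limit gives $\norm{E_{\theta_j}\e_a}=\norm{E_{\theta_j}\e_b}$ for \emph{every} $j$ at once, which disposes of the case $\theta_j\notin\sigma_a$ immediately and makes your separate time-reversal step unnecessary, though that step is also valid as written.
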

 If $P$ is a matrix of an automorphism of $G$ with adjacency matrix $A$  then $P$ commutes with $A$. Since the transition matrix $U(t)$ is a polynomial in $A$, the matrices $P$ and $U(t)$ commute as well. Therefore, if $G$ allows PGST between $a$ and $b,$ then each automorphism fixing $a$ must fix $b$.
 We use the following Kronecker approximation theorem on simultaneous approximation in characterizing double subdivided stars having PGST.
\begin{thm}\label{t1} \cite{apo} 
Let $\alpha_1,\alpha_2,\ldots,\alpha_l$ be arbitary real numbers. If $1,\theta_1,\ldots,\theta_l $ are real, algebraic numbers linearly independent over $\mathbb{Q}$, then for $\epsilon>0$, there exists $q\in\mathbb{Z}$ and $p_1,p_2,\ldots,p_l\in\mathbb{Z}$ such that 
	\[|q\theta_j-p_j-\alpha_j|<\epsilon.\]
\end{thm}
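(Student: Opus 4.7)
The plan is to deduce the statement from Weyl's equidistribution theorem applied to the orbit of the sequence $x_q = (q\theta_1, q\theta_2, \ldots, q\theta_l) \pmod 1$ in the $l$-dimensional torus $\mathbb{T}^l = \mathbb{R}^l/\mathbb{Z}^l$ for $q = 1, 2, 3, \ldots$. If I can show this sequence is equidistributed (hence dense) in $\mathbb{T}^l$, then for any target $(\alpha_1, \ldots, \alpha_l)$ and any $\epsilon > 0$, some $x_q$ lies within $\epsilon$ of $(\alpha_1, \ldots, \alpha_l) \bmod 1$. Unwrapping the reduction modulo $1$ then yields integers $p_1, \ldots, p_l$ satisfying $|q\theta_j - p_j - \alpha_j| < \epsilon$, which is exactly the desired conclusion.

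The central step is to verify the hypothesis of Weyl's criterion: for every nonzero integer vector $\mathbf{n} = (n_1, \ldots, n_l)$, the exponential sums
\[
\frac{1}{N}\sum_{q=1}^{N}\exp\bigl(2\pi i\,\langle \mathbf{n}, x_q\rangle\bigr) = \frac{1}{N}\sum_{q=1}^{N}\exp\bigl(2\pi i\,q(n_1\theta_1 + \cdots + n_l\theta_l)\bigr)
\]
tend to $0$ as $N \to \infty$. This is a finite geometric sum with ratio $\exp(2\pi i\,\beta)$ where $\beta = n_1\theta_1 + \cdots + n_l\theta_l$, so it is bounded in $N$ provided $\beta \notin \mathbb{Z}$, whence the average tends to zero. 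The condition $\beta \in \mathbb{Z}$ is precisely a nontrivial $\mathbb{Q}$-linear relation among $1, \theta_1, \ldots, \theta_l$, ruled out by the hypothesis.

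The main technical obstacle is essentially none once Weyl's criterion is granted, since the criterion does the heavy lifting. However, one subtle point worth noting is that the statement invokes \emph{algebraic} $\theta_j$, whereas the argument above uses only the $\mathbb{Q}$-linear independence of $1,\theta_1,\ldots,\theta_l$ and nowhere exploits algebraicity. The algebraicity is a mild strengthening natural for this paper (all eigenvalues of graphs are algebraic integers), but it is not required by the proof. An alternative route would be a more classical proof by induction on $l$ following Kronecker's original approach or Bohr's almost-periodic function technique, but the Weyl criterion gives the cleanest and most transparent derivation and lifts easily to the algebraic setting at hand.
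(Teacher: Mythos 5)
The paper does not prove this statement at all: it is quoted verbatim as a known result (Kronecker's simultaneous approximation theorem) with a citation to Apostol, so there is no internal proof to compare against. Your argument is a correct and standard derivation. The reduction to Weyl's criterion is sound: for a nonzero integer vector $\mathbf{n}$ the quantity $\beta=n_1\theta_1+\cdots+n_l\theta_l$ cannot lie in $\mathbb{Z}$, since $\beta=k$ would be the nontrivial relation $n_1\theta_1+\cdots+n_l\theta_l-k\cdot 1=0$ among $1,\theta_1,\ldots,\theta_l$; hence the geometric sums are uniformly bounded, the Ces\`aro averages vanish, the orbit $(q\theta_1,\ldots,q\theta_l)\bmod 1$ is equidistributed and in particular dense in $\mathbb{T}^l$, and unwrapping the reduction modulo $1$ produces the integers $p_j$ with $|q\theta_j-p_j-\alpha_j|<\epsilon$. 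Your side remark is also accurate and worth keeping: the algebraicity of the $\theta_j$ plays no role anywhere in the argument (only $\mathbb{Q}$-linear independence of $1,\theta_1,\ldots,\theta_l$ is used), so the theorem as used in the paper is a special case of the general Kronecker theorem; the hypothesis is harmless here because graph eigenvalues are algebraic. The only cosmetic point is that your construction yields a positive integer $q$, which of course satisfies the stated requirement $q\in\mathbb{Z}$.
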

Now we recall few results on the spectra of graphs. Let $G$ be a graph having distinct eigenvalues $\theta_1, \theta_2, \ldots , \theta_d$ with
multiplicities $k_1, k_2,\ldots,k_d$, respectively. We denote the spectrum of $G$ as, $\theta_1^{k_1}, \theta_2^{k_2}, \ldots , \theta_d^{k_d}$. In case $\theta_i$ is a simple eigenvalue, we omit the power $k_i=1.$ A graph $G$ is bipartite if there is a bipartition of the
set of vertices such that the edges connect only vertices in
different parts.  
 The eigenvalues and the corresponding eigenvectors of a bipartite graph has a special structure as mentioned below. 
\begin{prop}\label{p3}\cite{bro}
If $\theta$ is an eigenvalue of a bipartite graph $G$ with multiplicity $k$, then $-\theta$ is also an eigenvalue of $G$ having the same multiplicity. If
$\left[\begin{smallmatrix}
    u \\ v
\end{smallmatrix}\right]$ is an eigenvector with eigenvalue $\theta$, then $\left[\begin{smallmatrix}
    u \\ -v
\end{smallmatrix}\right]$ is an eigenvector with eigenvalue $-\theta$.
\end{prop}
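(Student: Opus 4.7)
The plan is to use the bipartite structure to put the adjacency matrix into a block form in which the sign symmetry becomes transparent. Writing $(V_1,V_2)$ for the bipartition and ordering the vertices of $V_1$ first, the adjacency matrix takes the form
\[
A = \begin{bmatrix} 0 & B \\ B^T & 0 \end{bmatrix},
\]
so every vector of the appropriate size splits compatibly as $\begin{bmatrix} u \\ v \end{bmatrix}$ with $u$ supported on $V_1$ and $v$ on $V_2$.

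The eigenvector claim will then drop out of a direct computation. Expanding $A\begin{bmatrix} u \\ v \end{bmatrix} = \theta\begin{bmatrix} u \\ v \end{bmatrix}$ yields the two relations $Bv = \theta u$ and $B^T u = \theta v$. Feeding these into the action of $A$ on $\begin{bmatrix} u \\ -v \end{bmatrix}$ gives
\[
A\begin{bmatrix} u \\ -v \end{bmatrix} = \begin{bmatrix} -Bv \\ B^T u \end{bmatrix} = \begin{bmatrix} -\theta u \\ \theta v \end{bmatrix} = -\theta \begin{bmatrix} u \\ -v \end{bmatrix},
\]
which is exactly the claimed eigenvector for the eigenvalue $-\theta$.

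For the multiplicity statement I would promote the sign-flip to a similarity. Setting $D = \begin{bmatrix} I & 0 \\ 0 & -I \end{bmatrix}$, one checks $D^2 = I$ and $DAD = -A$. The linear map $x \mapsto Dx$ is therefore an involution that sends the $\theta$-eigenspace of $A$ isomorphically onto the $(-\theta)$-eigenspace, so the two eigenspaces must have the same dimension.

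I do not anticipate any serious obstacle here; the bipartite block structure and the involution $D$ essentially write the argument on their own. The only edge case worth a sentence is $\theta = 0$, where the two eigenspaces coincide and the claim is automatic, so the symmetry really only has content for nonzero eigenvalues. The main choice is stylistic, namely whether to present the result through the componentwise calculation with $B$ and $B^T$, or more abstractly through the identity $DAD^{-1} = -A$; either route reaches the proposition in a handful of lines.
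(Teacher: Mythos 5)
Your argument is correct and complete: the block decomposition $A=\left[\begin{smallmatrix}0 & B\\ B^T & 0\end{smallmatrix}\right]$, the direct verification that $\left[\begin{smallmatrix}u\\ -v\end{smallmatrix}\right]$ is a $(-\theta)$-eigenvector, and the involution $D=\left[\begin{smallmatrix}I & 0\\ 0 & -I\end{smallmatrix}\right]$ with $DAD=-A$ giving the equality of multiplicities all check out, and your remark about $\theta=0$ is the right edge case to flag. The paper itself states this proposition without proof, citing it from the reference on spectra of graphs, and your argument is precisely the standard one given there, so there is nothing to reconcile.
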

 In the above Proposition \ref{p3}, the vectors $u$ and $v$ correspond to the vertices in the two partite sets of $G$.
If two vertices $a$ and $b$ are adjacent then we write $a\sim b.$ An eigenvector $v$ can be realized as a function on the vertex set $V(G)$ where $v(a)$ denotes the $a$-th component of $v.$ Then $v$ is an eigenvector of $G$ with eigenvalue $\theta$ if and only if \begin{equation}\label{eqn1}
 \theta\cdot v(a)=\sum_{b\sim a}v(b)~~\text{for all $a\in V(G)$},\end{equation}
 where the summation is taken over all vertices $b\in V(G)$ that are adjacent to $a.$  Later \eqref{eqn1} shall be used in determining the eigenvectors of $T_{l,m}.$ The spectrum of a subdivided star $SK_{1,l}$ is given in \cite{bro}, which can also be obtained using \eqref{eqn1} as
\[-\sqrt{l+1},~ (-1)^{l-1},~ 0,~ 1^{l-1},~ \sqrt{l+1}.\]
The characteristic polynomial of $SK_{1,l}$ is $x(x^2-1)^{l-1}(x^2-l-1).$  In the following section, we determine the set of linearly independent eigenvalues of $T_{l,m},$ which proves to be significant in characterizing state transfer in double subdivided stars. 
\section{Linear independence of eigenvalues}\label{s2} 
Suppose $H$ is a graph having a vertex $g.$ Then $H-g$ is the induced subgraph obtained by removing the vertex $g$ from $H.$  Recall that a double subdivided star $G:=T_{l,m}$ is considered as a pair of subdivided stars $H:=SK_{1,l}$ and $H':=SK_{1,m}$ joined by an edge to the respective coalescence vertices, say, $a$ and $b$. Using \cite[Theorem 2.2.4]{cve1}, the characteristic polynomial of $G$ can be evaluated as 
\begin{align*}
 P_G(x) & =P_{H}(x)P_{H'}(x)-P_{H-a}(x)P_{H'-b}(x)
\\ &=x(x^2-1)^{l-1}(x^2-l-1)x(x^2-1)^{m-1}(x^2-m-1)-(x^2-1)^l(x^2-1)^m
\\ &=(x^2-1)^{l+m-2}q(x),
\end{align*}
where $q(x)=x^6-(l+m+3)x^4+(lm+l+m+3)x^2-1.$ One can observe that $q(x)$ is a polynomial having only the even terms, and none of $-1,0$ and $1$ are roots of $q(x)$. Suppose $q(x)$ has the roots $\pm\theta_1$, $\pm\theta_2$, $\pm\theta_3$. Considering $Q(x)=x^3-(l+m+3)x^2+(lm+l+m+3)x-1$, we have
 $Q(x^2)=q(x),$ and hence $\theta_1^2$, $\theta_2^2$, $\theta_3^2$ are the roots of $Q(x)$. Since $Q(x)$ has no rational root, it is  irreducible  over $\mathbb{Q}$, and hence all roots of $Q(x)$ are simple. Consequently, all roots of $q(x)$ are simple as well.
 Then the spectrum of $T_{l,m}$ is	\[(-1)^{l+m-2},~ 1^{l+m-2},~\pm\theta_1,~\pm\theta_2,~\pm\theta_3.\]
Since $\theta_1^2$, $\theta_2^2$, $\theta_3^2$ are the roots of $Q(x)$, we also have the following identities.
\begin{eqnarray}
    \theta_1^2+\theta_2^2+\theta_3^2 &=& l+m+3. \label{e1}\\
\theta_1^2\theta_2^2+\theta_2^2\theta_3^2+\theta_3^2\theta_1^2&=&lm+l+m+3. \label{e2}\\
\theta_1^2\theta_2^2\theta_3^2&=&1.\label{e3}
\end{eqnarray}

 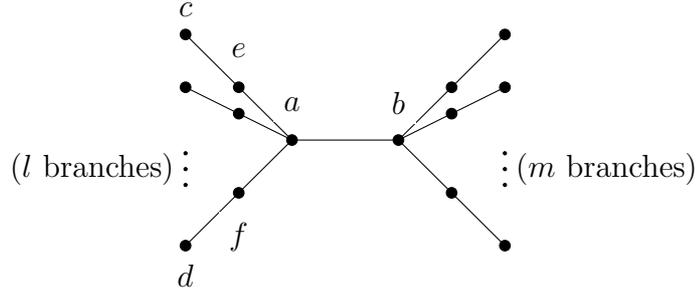
\begin{figure}
\begin{center}
\begin{tikzpicture}[scale=.7]
		\draw[black](0,0)--(2,0)-- (3,1)--(4,2);
		\draw[black](0,0)-- (-1,1)-- (-2,2);
        \draw[black](0,0)--(-1,0.5)--(-2,1);
        \draw[black] (0,0)--(-1,-1)--(-2,-2);
        \draw[black](2,0)-- (3,0.5)-- (4,1);
		\draw[black] (2,0) -- (3,-1) -- (4,-2);
	\filldraw (0,0) circle (0.1cm) node (A)  {}     node[anchor=south,fill=white,yshift=0.24cm]{$a$};
		\filldraw (2,0) circle (0.1cm) node (B) {}     node[anchor=south,fill=white,yshift=0.2cm]{$b$};
		\filldraw (-1,1) circle (0.1cm) node {}     node[anchor=south,fill=white,yshift=0.25cm]{$e$};
  \filldraw (-1,0.5) circle (0.1cm) node {}     node[anchor=south,fill=white,yshift=1cm]{};
  \filldraw (-2,1) circle (0.1cm) node {}     node[anchor=south,fill=white,yshift=0.1cm]{};
		\filldraw (-1,-1) circle (0.1cm) node  {}     node[anchor=north,fill=white,yshift=-0.25cm]{$f$};
		\filldraw (-2,2) circle (0.1cm) node  {}     node[anchor=south,fill=white,yshift=0.1cm]{$c$};
   \filldraw (-2,-0.25) circle (0.03cm) node {}     node[anchor=south,fill=white,yshift=0.4cm]{};
  \filldraw (-2,-0.55) circle (0.03cm) node [left] {($l$ branches)}     node[anchor=south,fill=white,yshift=0.6cm]{};
   \filldraw (-2,-0.85) circle (0.03cm) node {}     node[anchor=south,fill=white,yshift=0.9cm]{};
		\filldraw (-2,-2) circle (0.1cm) node  {}     node[anchor=north,fill=white,yshift=-0.10cm]{$d$};
		\filldraw (3,1) circle (0.1cm) node {}     node[anchor=south,fill=white,yshift=0.3cm]{};
  \filldraw (3,0.5) circle (0.1cm) node {}     node[anchor=south,fill=white,yshift=1cm]{};
  \filldraw (4,1) circle (0.1cm) node {}     node[anchor=south,fill=white,yshift=0.1cm]{};
   \filldraw (4,-0.25) circle (0.03cm) node {}     node[anchor=south,fill=white,yshift=0.4cm]{};
  \filldraw (4,-0.55) circle (0.03cm) node [right] {($m$ branches)}     node[anchor=south,fill=white,yshift=0.6cm]{};
   \filldraw (4,-0.85) circle (0.03cm) node {}     node[anchor=south,fill=white,yshift=0.9cm]{};
		\filldraw (3,-1) circle (0.1cm) node  {}     node[anchor=north,fill=white,yshift=-0.35cm]{};
		\filldraw (4,2) circle (0.1cm) node  {}     node[anchor=south,fill=white,yshift=0.1cm]{};
		\filldraw (4,-2) circle (0.1cm) node  {}     node[anchor=north,fill=white,yshift=-0.15cm]{};
\end{tikzpicture}	
\end{center}
 \caption{The double subdivided star $T_{l,m}.$}
 \label{fig1}
 \end{figure}
  The next result demonstrates that if the polynomial $q(x)$ is reducible, then $1,\theta_1,\theta_2$ are linearly independent over $\mathbb{Q}.$
\begin{lem}\label{lemm5}
Let $l$ and $m$ be two positive integers. Suppose $\pm\theta_1, \pm\theta_2, \pm\theta_3$ are the roots of $q(x)=x^6-(l+m+3)x^4+(lm+l+m+3)x^2-1$
in its splitting field over $\mathbb{Q}.$ If $q(x)$ is reducible, then $1,\theta_1,\theta_2$ are linearly independent over $\mathbb{Q}.$
\end{lem}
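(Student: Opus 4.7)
The plan is to argue by contradiction. Suppose $q(x)$ is reducible and, aiming for a contradiction, that there exist rationals $a, b, c$, not all zero, with $a + b\theta_1 + c\theta_2 = 0$.

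First I would classify the rational factorizations of $q(x)$. Since $Q(x)$ is irreducible, no $\theta_i^2$ is rational; invoking \eqref{e3}, no product $\theta_i^2\theta_j^2 = 1/\theta_k^2$ is rational either. Hence $q(x)$ cannot have a linear factor, and any quadratic factor of the form $(x-\epsilon\theta_i)(x-\epsilon'\theta_j)$ is ruled out: its constant term would force $\theta_i^2 \in \mathbb{Q}$ (when $i=j$) or $\theta_i\theta_j \in \mathbb{Q}$ (when $i \neq j$), the latter on squaring giving $\theta_i^2\theta_j^2 \in \mathbb{Q}$, again impossible. Consequently $q(x) = f(x)g(x)$ with $\deg f = \deg g = 3$, and because a cubic factor cannot contain both $\theta_i$ and $-\theta_i$ (that would place $\theta_i^2$ in $\mathbb{Q}$), one has $f(x) = \prod_{i=1}^{3}(x - \epsilon_i\theta_i)$ for some $\epsilon_i \in \{\pm1\}$. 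Therefore $\epsilon_1\theta_1 + \epsilon_2\theta_2 + \epsilon_3\theta_3 \in \mathbb{Q}$. Since the unordered set $\{\pm\theta_i\}$ is unchanged by replacing $\theta_i$ with $\epsilon_i\theta_i$, and since $\mathbb{Q}$-linear dependence of $\{1,\theta_1,\theta_2\}$ is equivalent to $\mathbb{Q}$-linear dependence of $\{1,\epsilon_1\theta_1,\epsilon_2\theta_2\}$, I may assume without loss of generality that
\[\theta_1 + \theta_2 + \theta_3 = s \in \mathbb{Q}.\]

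Next I exploit the assumed dependence. If $b = 0$ or $c = 0$, one of $\theta_1, \theta_2$ lies in $\mathbb{Q}$, so its square does, contradicting the irreducibility of $Q$. Therefore $bc \neq 0$, giving $\theta_2 = \alpha + \beta\theta_1$ with $\alpha, \beta \in \mathbb{Q}$ and $\beta \neq 0$. Combined with the relation above, this forces
\[\theta_3 = (s - \alpha) - (1+\beta)\theta_1.\]
Substituting into the Newton-type identity \eqref{e1} yields
\[\theta_1^2 + (\alpha + \beta\theta_1)^2 + \bigl((s-\alpha) - (1+\beta)\theta_1\bigr)^2 = l+m+3.\]
The coefficient of $\theta_1^2$ on the left is $1 + \beta^2 + (1+\beta)^2 = 2(\beta^2 + \beta + 1)$, strictly positive for every real $\beta$. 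On the other hand, $\theta_1^2$ satisfies the irreducible cubic $Q$, so $[\mathbb{Q}(\theta_1^2):\mathbb{Q}] = 3$ and hence $1, \theta_1, \theta_1^2$ are $\mathbb{Q}$-linearly independent; this forces the $\theta_1^2$-coefficient in the display above to vanish, a contradiction.

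The main delicate step is the structural analysis of factorizations of $q(x)$: ruling out quadratic factors requires combining the irreducibility of $Q$ with the product relation \eqref{e3}, and the WLOG sign relabeling must be justified with care since the statement singles out the labels $\theta_1$ and $\theta_2$. Once this is secured and \eqref{e1} has been invoked, the remainder of the proof collapses to the observation that $2(\beta^2+\beta+1)$ is a positive rational for every real $\beta$, which delivers the contradiction at once.
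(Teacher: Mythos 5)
Your proof is correct, but it follows a genuinely different route from the paper's. The paper also begins by noting that a reducible $q(x)$ must split into two irreducible cubics $f(x;l,m)$ and $f(-x;l,m)$ and relabels so that $\theta_1,\theta_2,\theta_3$ are the roots of one cubic (the same WLOG you justify, a bit more carefully, via the sign flips $\epsilon_i$); but from there it argues Galois-theoretically: the Galois group of the irreducible cubic is transitive, hence contains $A_3$, and applying the two $3$-cycles to a putative relation $\alpha+\beta\theta_1+\gamma\theta_2=0$ produces a $3\times 3$ homogeneous system whose coefficient matrix is shown to have rank $3$ (the nonvanishing of the last pivot uses that $\theta_3$ is the largest root, so $(\theta_1-\theta_2)^2+(\theta_3-\theta_1)(\theta_3-\theta_2)>0$). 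You instead avoid the Galois action entirely: you extract the rational sum $s=\theta_1+\theta_2+\theta_3$ from the cubic factor, solve the assumed dependence for $\theta_2$ and hence $\theta_3$ as affine functions of $\theta_1$, substitute into the power-sum identity \eqref{e1}, and use $[\mathbb{Q}(\theta_1):\mathbb{Q}]\ge[\mathbb{Q}(\theta_1^2):\mathbb{Q}]=3$ to force the $\theta_1^2$-coefficient $2(\beta^2+\beta+1)$ to vanish, which is impossible by positivity. Each step checks out, including the elimination of linear and quadratic factors via \eqref{e3} and the exclusion of $b=0$ or $c=0$. What your approach buys is a more elementary, self-contained argument whose final contradiction is a clean positivity statement rather than a pivot computation; what the paper's approach buys is uniformity, since the same Galois-group technique is reused (with $A_6$ in place of $A_3$) to handle the irreducible case in Theorem \ref{t9}, whereas your substitution trick is tailored to a dependence among only two of the $\theta_i$.
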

\begin{proof}
     One can observe that if the polynomial $q(x)$ is reducible for some $l$ and $m$, then it must be factored into two irreducible monic polynomials $f(x;l,m)$ and $f(-x;l,m)$ of degree three such that $q(x)=-f(x;l,m)\cdot f(-x;l,m).$ Without loss of generality, let $\theta_1,\theta_2,\theta_3$ be the distinct roots of $f(x;l,m)$, and suppose $\theta_3$ is the largest among them. Let $\alpha,\beta,\gamma \in \mathbb{Q}$ such that
\begin{equation}\label{eq27}
\alpha+\beta\theta_1+\gamma\theta_2=0.
\end{equation}
By \cite[Theorem 13.27]{dum}, the Galois group of the irreducible polynomial $f(x;l,m)$ can be realized as a transitive subgroup of $S_3$ with respect to the ordering of roots $\theta_1, \theta_2,  \theta_3$. So the Galois group of $f(x;l,m)$ must contain the alternating group $A_3=\{(1),(123),(132)\}.$ As the automorphisms in the Galois group fix $\mathbb{Q},$ the elements of $A_3$ acting on \eqref{eq27} give
\[\alpha+\beta\theta_1+\gamma\theta_2=0,~\alpha+\beta\theta_2+\gamma\theta_3=0,~
\alpha+\beta\theta_3+\gamma\theta_1=0,
\]
which is a homogeneous system of linear equations in $\alpha,~\beta$ and $\gamma.$ The coefficient matrix  can be reduced to obtain
\[\begin{bmatrix}
  1 & \theta_1 & \theta_2\\
	0 & \theta_2-\theta_1 & \theta_3-\theta_2\\
        0 & 0 & \dfrac{(\theta_1-\theta_2)^2+(\theta_3-\theta_1)(\theta_3-\theta_2)}{(\theta_1-\theta_2)}
\end{bmatrix}.\]
Since all the pivots are non zero, the rank of the coefficient matrix is $3.$ Consequently, $1,\theta_1,\theta_2$ are linearly independent over $\mathbb{Q}.$
\end{proof}
From \eqref{e1} and \eqref{e2} we find 
\begin{eqnarray} 
(\theta_1+\theta_2+\theta_3)^2&=&l+m+3+2(\theta_1\theta_2+\theta_2\theta_3+\theta_1\theta_3),\label{e35}\\(\theta_1\theta_2+\theta_2\theta_3+\theta_1\theta_3)^2&=&lm+l+m+3+2\theta_1\theta_2\theta_3(\theta_1+\theta_2+\theta_3).\label{e34}
\end{eqnarray}
If $\theta_1+\theta_2+\theta_3=0,$ then \eqref{e35} and \eqref{e34} implies that $(l-m)^2+2l+2m=3$, which is impossible as $l,m\in\mathbb{N}.$ Now Lemma \ref{lemm5} along with $\theta_1+\theta_2+\theta_3 \neq 0$ infer that if the polynomial $q(x)$ is reducible, then any proper subset of  $\{1,\theta_1,\theta_2,\theta_3\}$ is linearly independent over $\mathbb{Q}.$
\begin{thm}\label{cor2}
Let $l,m\in\mathbb{N}.$ If the polynomial $q(x)=x^6-(l+m+3)x^4+(lm+l+m+3)x^2-1$ is reducible over $\mathbb{Q},$ then the set of all positive eigenvalues of $T_{l,m}$ except one is linearly independent over $\mathbb{Q}.$  
\end{thm}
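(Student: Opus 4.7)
The plan is to verify the independence claim for each of the four size-three subsets arising from the positive eigenvalues $\{1,\theta_1,\theta_2,\theta_3\}$ of $T_{l,m}$. These subsets split into two types: the three of the form $\{1,\theta_i,\theta_j\}$, and the single subset $\{\theta_1,\theta_2,\theta_3\}$ that does not contain $1$.

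For each subset of the form $\{1,\theta_i,\theta_j\}$, the argument is a direct transcription of Lemma~\ref{lemm5} after relabeling. Since $\theta_1,\theta_2,\theta_3$ are the simple roots of the same irreducible cubic factor $f(x;l,m)$, and since the Galois group of $f(x;l,m)$ contains $A_3$ acting transitively on them, the $\theta_k$'s may be permuted freely, and the rank-three coefficient matrix produced in the proof of Lemma~\ref{lemm5} appears again with its rows reordered. No new computation is needed here.

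The only genuinely new case is $\{\theta_1,\theta_2,\theta_3\}$, and this will be the main technical step. I would suppose $\beta_1\theta_1+\beta_2\theta_2+\beta_3\theta_3=0$ for some $\beta_1,\beta_2,\beta_3\in\mathbb{Q}$ and apply the three elements of $A_3$, yielding a homogeneous linear system in $(\beta_1,\beta_2,\beta_3)$ whose coefficient matrix is the circulant
\[ M=\begin{bmatrix}\theta_1 & \theta_2 & \theta_3\\ \theta_2 & \theta_3 & \theta_1\\ \theta_3 & \theta_1 & \theta_2\end{bmatrix}. \]
A direct cofactor expansion, together with the identity $a^3+b^3+c^3-3abc=(a+b+c)(a^2+b^2+c^2-ab-bc-ca)$, gives
\[ \det M=-\tfrac{1}{2}(\theta_1+\theta_2+\theta_3)\bigl((\theta_1-\theta_2)^2+(\theta_2-\theta_3)^2+(\theta_3-\theta_1)^2\bigr). \]
The first factor is nonzero by the remark preceding the theorem, where \eqref{e35} and \eqref{e34} combine to rule out $\theta_1+\theta_2+\theta_3=0$ when $l,m\in\mathbb{N}$; the second factor is nonzero since $\theta_1,\theta_2,\theta_3$ are distinct roots of an irreducible polynomial. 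Hence $M$ has rank three, forcing $\beta_1=\beta_2=\beta_3=0$. The only delicate point is recognising that the $A_3$-action produces precisely a circulant whose determinant splits into the two quantities whose nonvanishing is already on record; once this factorisation is in hand, the conclusion is immediate.
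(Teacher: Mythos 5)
Your proposal is correct, and for the three subsets containing $1$ it coincides with the paper's argument (invoke Lemma~\ref{lemm5} up to relabelling of the roots of $f(x;l,m)$ — legitimate because the decisive pivot $(\theta_1-\theta_2)^2+(\theta_3-\theta_1)(\theta_3-\theta_2)=\tfrac{1}{2}\sum_{i<j}(\theta_i-\theta_j)^2$ is symmetric in the roots, so the ``largest root'' normalisation in that proof is immaterial). Where you diverge is the subset $\{\theta_1,\theta_2,\theta_3\}$: you rerun the $A_3$-action from scratch and evaluate the resulting circulant determinant, $-\tfrac{1}{2}(\theta_1+\theta_2+\theta_3)\sum_{i<j}(\theta_i-\theta_j)^2$, which is indeed nonzero by the remark preceding the theorem and the simplicity of the roots. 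The paper instead disposes of this case in one line by observing that $s:=\theta_1+\theta_2+\theta_3$ is a \emph{rational} number (it is minus the $x^2$-coefficient of $f(x;l,m)$) and nonzero by \eqref{e35}--\eqref{e34}; hence any relation $\beta_1\theta_1+\beta_2\theta_2+\beta_3\theta_3=0$ rewrites, via $\theta_3=s-\theta_1-\theta_2$, as $\beta_3 s+(\beta_1-\beta_3)\theta_1+(\beta_2-\beta_3)\theta_2=0$ and is killed directly by Lemma~\ref{lemm5}. Both routes consume exactly the same two nonvanishing facts; yours is more self-contained and makes the role of $\theta_1+\theta_2+\theta_3\neq 0$ visible as a determinant factor, while the paper's substitution avoids any new computation at the cost of leaving the reduction implicit.
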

Note that if $l=m$ then $q(x)=-f(x;l,l)\cdot f(-x;l,l)$ where $f(x;l,l)=x^3-x^2-(l+1)x +1.$
The roots $\theta_1, \theta_2, \theta_3$ of $f(x;l,l)$ satisfy the following relations.
\begin{eqnarray}
\theta_1+\theta_2+\theta_3 &=& 1. \label{e10}\\
\theta_1\theta_2+\theta_2\theta_3+\theta_3\theta_1 &=&-(l+1). \label{e11}\\
\theta_1\theta_2\theta_3 &=& -1. \label{e12}
 \end{eqnarray}
 As a consequence to Theorem \ref{cor2} we have the following result.
\begin{cor}\label{cor3}
The set of all positive eigenvalues of $T_{l,l}$ except one is linearly independent over $\mathbb{Q}$ for all positive integer $l.$ 
\end{cor}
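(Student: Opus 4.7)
The plan is to obtain Corollary \ref{cor3} as an immediate consequence of Theorem \ref{cor2}, so the only real content is verifying that the polynomial $q(x)$ is reducible over $\mathbb{Q}$ in the symmetric case $l=m$. Setting $m=l$ reduces $q(x)$ to $x^6 - (2l+3)x^4 + (l^2+2l+3)x^2 - 1$, and the paper already records the candidate factorization $q(x) = -f(x;l,l)\cdot f(-x;l,l)$ with $f(x;l,l) = x^3 - x^2 - (l+1)x + 1$, so the work consists of justifying this factorization and confirming that it is nontrivial.

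First I would confirm the factorization by direct expansion of $-f(x;l,l)\cdot f(-x;l,l)$ and matching coefficients against $q(x)$ at $m=l$; this is routine bookkeeping. Then I would show that $f(x;l,l)$ is irreducible over $\mathbb{Q}$, so that $q(x)$ truly splits into two irreducible cubic factors, which is the form required by Theorem \ref{cor2} and the proof of Lemma \ref{lemm5}. Because $f(x;l,l)$ is a cubic, irreducibility over $\mathbb{Q}$ is equivalent to the absence of rational roots, and by the rational root theorem any such root must lie in $\{\pm 1\}$. A direct substitution gives $f(1;l,l) = -l$ and $f(-1;l,l) = l$, both nonzero for every positive integer $l$, so $f(x;l,l)$ has no rational root and is therefore irreducible.

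With reducibility of $q(x)$ established for all positive integers $l$, Theorem \ref{cor2} applies with $m=l$ and yields the desired linear independence of all positive eigenvalues of $T_{l,l}$ except one. I do not anticipate any serious obstacle here: the entire argument is a short factorization check followed by a one-line rational-root test on the cubic $f(x;l,l)$, with the real work already carried out in Lemma \ref{lemm5} and Theorem \ref{cor2}.
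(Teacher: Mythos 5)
Your proposal is correct and follows the paper's own route: the paper likewise deduces Corollary \ref{cor3} from Theorem \ref{cor2} by exhibiting the factorization $q(x)=-f(x;l,l)\cdot f(-x;l,l)$ with $f(x;l,l)=x^3-x^2-(l+1)x+1$ when $m=l$. Your added checks (expanding the product to confirm it equals $x^6-(2l+3)x^4+(l^2+2l+3)x^2-1$, and the rational-root test giving $f(\pm 1;l,l)=\mp l\neq 0$) are accurate and simply make explicit what the paper leaves to the reader.
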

Before we proceed with the case when $q(x)$ is irreducible over $\mathbb{Q},$ consider the following result on the linear independence of $\theta_1^2,\theta_2^2$ and $\theta_3^2$ over $\mathbb{Q}.$
\begin{lem}\label{lemma3}
Let  $\theta_1^2,\theta_2^2,\theta_3^2$ be the roots of 
$Q(x)=x^3-(l+m+3)x^2+(lm+l+m+3)x-1,$ for $l,m\in\mathbb{N},$ in its splitting field over $\mathbb{Q}$. Then
$\theta_1^2,\theta_2^2,\theta_3^2$ are linearly independent over $\mathbb{Q}$.
\end{lem}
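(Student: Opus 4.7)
The plan is to mimic the Galois-theoretic argument of Lemma \ref{lemm5}, but now working in the cubic resolvent. Since $Q(x)$ has no rational root (as noted in the paragraph preceding equations \eqref{e1}--\eqref{e3}), $Q(x)$ is irreducible over $\mathbb{Q}$, so its Galois group is a transitive subgroup of $S_3$ and hence contains the alternating group $A_3=\{(1),(123),(132)\}$. The idea is to apply the two nontrivial $3$-cycles to any hypothetical linear relation and read off that the resulting $3\times 3$ coefficient matrix is nonsingular.

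Concretely, I would assume rationals $\alpha,\beta,\gamma$ with $\alpha\theta_1^2+\beta\theta_2^2+\gamma\theta_3^2=0$. Acting by the $3$-cycle sending $\theta_j^2$ to $\theta_{j+1}^2$ (indices mod $3$) and by its inverse yields the homogeneous system
\begin{equation*}
\begin{bmatrix}
\theta_1^2 & \theta_2^2 & \theta_3^2 \\
\theta_3^2 & \theta_1^2 & \theta_2^2 \\
\theta_2^2 & \theta_3^2 & \theta_1^2
\end{bmatrix}
\begin{bmatrix} \alpha \\ \beta \\ \gamma \end{bmatrix} = \mathbf{0}.
\end{equation*}
The coefficient matrix is circulant with first row $(\theta_1^2,\theta_2^2,\theta_3^2)$, so its determinant equals $\theta_1^6+\theta_2^6+\theta_3^6-3\theta_1^2\theta_2^2\theta_3^2$.

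The next step is to express this determinant in terms of the elementary symmetric functions given by \eqref{e1}--\eqref{e3}. Writing $e_1=l+m+3$, $e_2=lm+l+m+3$, $e_3=1$ and using Newton's identity $p_3=e_1^3-3e_1e_2+3e_3$ for the power sum $p_3=\theta_1^6+\theta_2^6+\theta_3^6$, the determinant simplifies to
\begin{equation*}
e_1^3 - 3e_1 e_2 + 3e_3 - 3e_3 \;=\; e_1\bigl(e_1^2 - 3e_2\bigr) \;=\; (l+m+3)\bigl((l+m+3)^2 - 3(lm+l+m+3)\bigr).
\end{equation*}

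The only remaining task -- and this is where I expect the minor obstacle to lie, though it is really just an inequality check -- is to verify this quantity is nonzero for every $l,m\in\mathbb{N}$. Expanding gives $(l+m+3)^2 - 3(lm+l+m+3) = l^2+m^2-lm+3l+3m$, which is bounded below by $1+6=7>0$ since $l^2+m^2-lm=(l-m)^2+lm\geq 1$ and $3l+3m\geq 6$ whenever $l,m\geq 1$. Because $e_1=l+m+3>0$ as well, the coefficient matrix is invertible, forcing $\alpha=\beta=\gamma=0$. This establishes the linear independence of $\theta_1^2,\theta_2^2,\theta_3^2$ over $\mathbb{Q}$.
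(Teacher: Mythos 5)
Your proposal is correct and follows essentially the same route as the paper: assume a rational relation, act by the $A_3$ part of the Galois group of the irreducible cubic $Q(x)$ to produce the $3\times 3$ circulant system, and show the coefficient matrix is nonsingular. The only difference is in the last step, where the paper row-reduces and uses $\theta_1^6+\theta_2^6+\theta_3^6>3$ (from $\theta_1^2\theta_2^2\theta_3^2=1$ with distinct roots), while you evaluate the circulant determinant as $e_1\bigl(e_1^2-3e_2\bigr)=(l+m+3)\bigl(l^2+m^2-lm+3l+3m\bigr)>0$ directly; both verifications are valid.
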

\begin{proof}
Let $\alpha,\beta,\gamma \in \mathbb{Q}$ such that
\begin{equation}\label{eq13}
\alpha\theta_1^2+\beta\theta_2^2+\gamma\theta_3^2=0.
\end{equation}
    Note that $Q(x)$ is an irreducible polynomial of degree $3$ over $\mathbb{Q}$. Since the Galois group corresponding to $Q(x)$ is transitive, it contains the alternating group $A_3.$ The elements of $A_3$ acting on \eqref{eq13} yield
\[\alpha\theta_1^2+\beta\theta_2^2+\gamma\theta_3^2=0,~
\alpha\theta_2^2+\beta\theta_3^2+\gamma\theta_1^2=0,~
\alpha\theta_3^2+\beta\theta_1^2+\gamma\theta_2^2= 0.\]
 The coefficient matrix for the corresponding homogeneous system is row equivalent to the following matrix.
 \[\begin{bmatrix}
  \theta_1^2 & \theta_2^2 & \theta_3^2\\\vspace{0.3cm}
	0 & \theta_3^2-\dfrac{\theta_2^4}{\theta_1^2} & \theta_1^2-\dfrac{\theta_2^2\theta_3^2}{\theta_1^2}\\\vspace{0.3cm}
        0 & 0 & \dfrac{3-\theta_1^6-\theta_2^6-\theta_3^6}{\theta_3^2\theta_1^2-\theta_2^4}
\end{bmatrix}.\]
Since $\theta_1^2,\theta_2^2,\theta_3^2$ are distinct real roots of $Q(x)$ satisfying $\theta_1^2\theta_2^2\theta_3^2=1,$ we have $\theta_1^6+\theta_2^6+\theta_3^6>3.$ Now $\theta_3^2-\dfrac{\theta_2^4}{\theta_1^2}=0$ infers that $\theta_2^6=1$ or $\theta_2^2=1,$ a contradiction. So all three pivots are non-zero, and therefore, the rank of the coefficient matrix is $3.$ Hence $\theta_1^2, \theta_2^2, \theta_3^2$ are linearly independent over $\mathbb{Q}.$
\end{proof}
Suppose $q(x)=x^6-(l+m+3)x^4+(lm+l+m+3)x^2-1$ is irreducible over $\mathbb{Q}$, and consider the Galois group $\mathcal{G}$ of $q(x)$ that fixes $\mathbb{Q}.$ Applying \cite[Theorem 13.27]{dum}, the Galois group $\mathcal{G}$ can be realized as a transitive subgroup of $S_6$ with respect to the ordering of roots $\theta_1, -\theta_1, \theta_2, -\theta_2, \theta_3, -\theta_3$ of $q(x).$  Since the discriminant $D$ of  $Q(x)$ satisfy $D=(\theta_2^2-\theta_1^2)^2(\theta_3^2-\theta_1^2)^2(\theta_3^2-\theta_2^2)^2\in\mathbb{Q},$  
 the discriminant of $q(x)$ evaluated as $64D^2$ is a square of an element in $\mathbb{Q}.$ Using \cite[Proposition 14.34]{dum}, the Galois group $\mathcal{G}$ is a transitive subgroup of the alternating group $A_6$. 
Now we have the following result.
\begin{thm}\label{t9}
Let $l$ and $m$ be two positive integers. Suppose
 $\pm\theta_1, \pm\theta_2, \pm\theta_3$ are the roots of $q(x)=x^6-(l+m+3)x^4+(lm+l+m+3)x^2-1$
in its splitting field over $\mathbb{Q}.$ If $q(x)$ is irreducible then $1,\theta_1,\theta_2,\theta_3$ are linearly independent over $\mathbb{Q}.$
 \end{thm}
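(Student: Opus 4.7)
The plan is to assume a hypothetical rational linear relation
\[
\alpha + \beta\theta_1 + \gamma\theta_2 + \delta\theta_3 = 0, \quad \alpha,\beta,\gamma,\delta \in \mathbb{Q},
\]
and to exhibit three ``double-flip'' automorphisms in $\mathcal{G}$ whose combined action on this relation forces every coefficient to vanish. For each pair $\{i,j\}\subset\{1,2,3\}$, I let $\tau_{ij}$ denote the element of the wreath product $C_2\wr S_3$ with trivial pair-permutation that sends $\theta_i\mapsto-\theta_i$, $\theta_j\mapsto-\theta_j$, and fixes $\theta_k$ for $\{i,j,k\}=\{1,2,3\}$. As a permutation of the six roots of $q(x)$, each $\tau_{ij}$ is a product of two disjoint transpositions, hence lies in $A_6$.

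The crucial task is to verify that all three of $\tau_{12}, \tau_{13}, \tau_{23}$ actually belong to $\mathcal{G}$. Let $\pi\colon \mathcal{G}\to S_3$ be the pair-permutation quotient introduced just before the theorem; since $Q(x)$ is irreducible, $A_3 \subseteq \pi(\mathcal{G})$. Set $K = \{e, \tau_{12}, \tau_{13}, \tau_{23}\}$ and $\mathcal{G}_0 = \mathcal{G}\cap K = \ker\pi$. Conjugation by any lift of a $3$-cycle in $A_3$ cyclically permutes the three nontrivial elements of $K$, so the only $\mathcal{G}$-invariant subgroups of $K$ are $\{e\}$ and $K$ itself; hence $|\mathcal{G}_0| \in \{1,4\}$. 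Transitivity of $\mathcal{G}$ on six roots forces $6 \mid |\mathcal{G}|$, while $\mathcal{G} \le A_6\cap(C_2\wr S_3)$, a group of order $24$, so $|\mathcal{G}| \in \{6, 12, 24\}$. The case $|\mathcal{G}|=6$ would require $\mathcal{G}$ to act regularly on six elements, forcing $\mathcal{G} \cong C_6$ or $\mathcal{G}\cong S_3$; but the regular embedding of $C_6$ produces a $6$-cycle and that of $S_3$ sends each transposition to a product of three disjoint transpositions, both of which are odd and contradict $\mathcal{G} \le A_6$. Therefore $|\mathcal{G}| \in \{12, 24\}$, whence $|\mathcal{G}_0|=4$ and $\mathcal{G}_0 = K$.

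Once $\tau_{12}, \tau_{13}, \tau_{23} \in \mathcal{G}$ are secured, applying them to the assumed relation produces together with the identity the system
\begin{align*}
\alpha + \beta\theta_1 + \gamma\theta_2 + \delta\theta_3 &= 0,\\
\alpha + \beta\theta_1 - \gamma\theta_2 - \delta\theta_3 &= 0,\\
\alpha - \beta\theta_1 + \gamma\theta_2 - \delta\theta_3 &= 0,\\
\alpha - \beta\theta_1 - \gamma\theta_2 + \delta\theta_3 &= 0.
\end{align*}
Summing all four gives $\alpha=0$, and pairwise sums of the first equation with each of the other three isolate $\beta\theta_1 = \gamma\theta_2 = \delta\theta_3 = 0$; since each $\theta_i$ is a nonzero root of $q(x)$, we conclude $\beta=\gamma=\delta=0$, proving the linear independence of $1,\theta_1,\theta_2,\theta_3$ over $\mathbb{Q}$. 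I expect the main obstacle to be the group-theoretic step $\mathcal{G}_0 = K$: ruling out $|\mathcal{G}_0|=1$ rests on the nonexistence of a regular transitive embedding of a group of order $6$ into $A_6$, while the orbit argument under $A_3$-conjugation excludes any intermediate subgroups of $K$. Once these points are settled, the final deduction is short linear algebra that does not require Lemma~\ref{lemma3}.
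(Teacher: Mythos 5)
Your proof is correct, but it takes a genuinely different route from the paper's. The paper picks a single automorphism $\sigma\in\mathcal{G}$ with $\sigma(\theta_1)=-\theta_1$ (guaranteed by transitivity), enumerates the four possible cycle types of $\sigma$ inside $A_6\cap(C_2\wr S_3)$ with respect to the root ordering $\theta_1,-\theta_1,\theta_2,-\theta_2,\theta_3,-\theta_3$, and then finishes each case by invoking Lemma \ref{lemma3} (linear independence of $\theta_1^2,\theta_2^2,\theta_3^2$) to kill the residual relation $\beta\theta_1+\gamma\theta_2=0$ or its analogue. You instead determine the entire kernel of the sign-forgetting map $\pi\colon\mathcal{G}\to S_3$: since $\ker\pi=\mathcal{G}\cap K$ is normal and conjugation by a lift of a $3$-cycle (available because $Q$ is irreducible, so $A_3\subseteq\pi(\mathcal{G})$) permutes $\tau_{12},\tau_{13},\tau_{23}$ cyclically, the kernel is $\{e\}$ or $K$; your order count $6\mid|\mathcal{G}|\mid 24$ together with the parity obstruction to a regular action of $C_6$ or $S_3$ inside $A_6$ rules out $|\mathcal{G}|=6$ and forces $\ker\pi=K$. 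All the group-theoretic steps check out (note only that the paper never explicitly introduces $\pi$ — it is implicit in the remark that automorphisms send $-\theta_i$ to $-\sigma(\theta_i)$ — so you should define it rather than cite it). What your approach buys: the endgame is four averaged copies of the relation that immediately give $\alpha=\beta\theta_1=\gamma\theta_2=\delta\theta_3=0$, with no appeal to Lemma \ref{lemma3} and no case analysis; you also establish the stronger structural fact that $\mathcal{G}$ contains the full Klein group of double sign flips. What the paper's approach buys is brevity of the group theory: it needs only the existence of one suitable $\sigma$ and a short list of its possible shapes, at the cost of leaning on the separate Lemma \ref{lemma3} and a case split.
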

\begin{proof}  
Let $\alpha,\beta,\gamma,\delta\in \mathbb{Q}$ such that
\begin{equation}\label{e39} \alpha+\beta\theta_1+\gamma\theta_2+\delta\theta_3=0.
\end{equation}

Since the Galois group $\mathcal{G}$ of $q(x)$ is  a transitive subgroup of $A_6$, there is an automorphism $\sigma\in \mathcal{G}$ such that $\sigma(\theta_1)=-\theta_1.$ Applying $\sigma$ on both sides of \eqref{e39} and adding the resulting equation to it yields
\begin{equation}\label{e41} 2\alpha+\gamma(\theta_2+\sigma(\theta_2))+\delta(\theta_3+\sigma(\theta_3))=0.
\end{equation}
Each automorphism in $\mathcal{G}$ that maps $\theta_i$ to $\theta_j$ must map $-\theta_i$ to $-\theta_j.$
Since $\mathcal{G}$ is a subgroup of $A_6$, the only possibility for $\sigma$ remains $(12)(34)$, $(12)(56)$, $(12)(3546)$ or $(12)(3645).$

If $\sigma=(12)(34)$, then \eqref{e41} becomes $\alpha+\delta\theta_3=0.$ Since $\theta_3\notin\mathbb{Q}$, we obtain $\alpha=\delta=0.$ Thus \eqref{e39} reduces to  $\beta\theta_1+\gamma\theta_2=0,$ which further gives $\beta =\gamma=0$ as Lemma \ref{lemma3} holds. Therefore $1,\theta_1,\theta_2,\theta_3$ are linearly independent over $\mathbb{Q}.$ Using similar argument for $\sigma=(12)(56)$, we arrive at the same conclusion. If $\sigma=(12)(3546)$ then we find $\sigma^{-1}=(12)(3645)$. Now \eqref{e41} becomes
\begin{equation}\label{e42}
 2\alpha+(\gamma-\delta)\theta_2+(\gamma+\delta)\theta_3=0.  
\end{equation} 
Applying $\sigma^2=(34)(56)$ on both sides of \eqref{e42} gives $\alpha=0.$ Finally by Lemma \ref{lemma3}, we find $\alpha=\beta=\gamma=\delta=0.$ Hence $1,\theta_1,\theta_2,\theta_3$ are linearly independent over $\mathbb{Q}.$
\end{proof}
Consequently from Theorem \ref{t9} we obsereve that if $q(x)$ is irreducible over $\mathbb{Q}$ then the set of all positive eigenvalues of $T_{l,m}$ is linearly independent over $\mathbb{Q}.$
\begin{cor}\label{cor4}
If $q(x)=x^6-(l+m+3)x^4+(lm+l+m+3)x^2-1, \text{ for } l,m\in\mathbb{N},$ is irreducible over $\mathbb{Q},$ then the set of all positive (negative) eigenvalues of $T_{l,m}$ is linearly independent over $\mathbb{Q}.$ \end{cor}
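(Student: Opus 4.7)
The plan is to unpack the meaning of the corollary in terms of the explicit spectrum of $T_{l,m}$ and then appeal directly to Theorem~\ref{t9}. Recall that the spectrum of $T_{l,m}$ has been computed earlier as
\[(-1)^{l+m-2},~ 1^{l+m-2},~\pm\theta_1,~\pm\theta_2,~\pm\theta_3,\]
so the set of distinct positive eigenvalues is exactly $\{1,\theta_1,\theta_2,\theta_3\}$ and the set of distinct negative eigenvalues is $\{-1,-\theta_1,-\theta_2,-\theta_3\}$. Note that $\theta_j\neq 1$ for each $j$, since $1$ is not a root of $q(x)$.

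For the positive case, since $q(x)$ is assumed irreducible over $\mathbb{Q}$, Theorem~\ref{t9} immediately yields that $1,\theta_1,\theta_2,\theta_3$ are linearly independent over $\mathbb{Q}$. For the negative case, I would observe that any rational linear dependence
\[\alpha(-1)+\beta(-\theta_1)+\gamma(-\theta_2)+\delta(-\theta_3)=0\]
with $\alpha,\beta,\gamma,\delta\in\mathbb{Q}$ is, after multiplication by $-1$, a rational linear relation among $1,\theta_1,\theta_2,\theta_3$, which forces $\alpha=\beta=\gamma=\delta=0$ by Theorem~\ref{t9}.

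There is essentially no obstacle here: this corollary is purely a translation of the algebraic statement of Theorem~\ref{t9} into the language of the graph spectrum. The only minor point is to be explicit that $\pm 1$ genuinely belong to the positive and negative eigenvalue sets respectively (coming from the factor $(x^2-1)^{l+m-2}$ in $P_G(x)$), so that the positive (resp.\ negative) side has exactly the four elements to which Theorem~\ref{t9} applies.
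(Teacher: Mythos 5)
Your proposal is correct and matches the paper, which states Corollary~\ref{cor4} as an immediate consequence of Theorem~\ref{t9} without further argument; your unpacking of the spectrum and the sign-flip for the negative eigenvalues is exactly the intended (routine) justification. The only negligible imprecision is that for $l=m=1$ the multiplicity exponent $l+m-2$ vanishes, so $\pm 1$ are not eigenvalues and the positive spectrum is the proper subset $\{\theta_1,\theta_2,\theta_3\}$, whose independence still follows from Theorem~\ref{t9}.
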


\section{State transfer on $T_{l,m}$}\label{s3}
In quest of the existence of PST (or PGST) in $T_{l,m}$ from a vertex $a,$ we analyze the eigenvalues in the support $\sigma_a$ and the corresponding orthogonal projections. In this regard, we determine the eigenvectors of $T_{l,m}$ corresponding to each of its eigenvalues. Recall that the eigenvectors are real-valued functions on the vertex set of $T_{l,m}.$ In case $l>1$ (or $m>1$), an eigenvector corresponding to $1$ can be obtained by assigning the value $1$ to a pair of adjacent vertices of degree $1$ and $2$ in a branch, $-1$ to another such pair in a branch adjacent to the previous one, and the remaining vertices are assigned $0.$ Considering other such adjacent branches, we obtain $l+m-2$ linearly independent eigenvectors corresponding to $1$. Similarly, a set of $l+m-2$ linearly independent eigenvectors for the eigenvalue $-1$ can be obtained using \eqref{eqn1}. Suppose $E_{-1}$  and $E_1$ are idempotents corresponding to $-1$ and $1$, respectively. For the vertices $a$ and $b$ in $T_{l,m}$ (see Figure \ref{fig1}), note that $-1$ and $1$ are not in $\sigma_a$ as well as  $\sigma_b$ since we have
$E_{-1}\e_a=E_{-1}\e_b=0$ and $E_{1}\e_a=E_{1}\e_b=0$. 
However, we observed that  
$E_{-1}\e_c=E_{-1}\e_d\neq0~\text{and}~
E_{1}\e_c=E_{1}\e_d\neq0,$ and hence both $-1$ and $1$ belong to $\sigma_c$ as well as $\sigma_d.$ 

The eigenvectors for the remaining eigenvalues are obtained as follows. Let $P$ be the permutation matrix corresponding to an automorphism of $T_{l,m}$. Note that $P$ commutes with the adjacency matrix $A$ of $T_{l,m}$. Let $v$ be an eigenvector of
$T_{l,m}$ satisfying $Av = \theta v$ with $\theta \neq -1, 1.$ Now $APv = PAv = \theta Pv$ implies that $Pv$ is also an eigenvector corresponding to $\theta$. Suppose the entries of $v$ are  $z_1,z_2,x_j,y_j,u_k,w_k$, where $j=1,2,\ldots,l$ and $k=1,2,\ldots,m$  as mentioned in Figure \ref{fig2}.
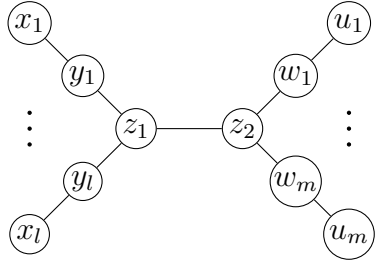
\begin{figure}
\begin{center}
\begin{tikzpicture}[scale=0.7]
   \node[circle,draw,inner sep=1pt] (A) at (0,0){$z_1$};
  \node[circle,draw,inner sep=1pt] (B) at (2,0){$z_2$};
  \draw (A) -- (B);
  \node[circle,draw,inner sep=1pt] (C) at (-1,1){$y_1$};
  \node[circle,draw,inner sep=1pt] (D) at (-1,-1){$y_l$};
  \draw (A) -- (C);
  \draw (A) -- (D);
  \node[circle,draw,inner sep=1pt] (E) at (-2,2){$x_1$};
  \node[circle,draw,inner sep=1pt] (F) at (-2,-2){$x_l$};
  \draw (C) -- (E);
  \draw (D) -- (F);
  \node[circle,draw,inner sep=0.5pt] (G) at (3,1){$w_1$};
  \node[circle,draw,inner sep=0.5pt] (H) at (3,-1){$w_m$};
  \draw (B) -- (G);
  \draw (B) -- (H);
  \node[circle,draw,inner sep=1pt] (I) at (4,2){$u_1$};
  \node[circle,draw,inner sep=1pt] (J) at (4,-2){$u_m$};
  \draw (G) -- (I);
  \draw (H) -- (J);
   \filldraw (-2,0.3) circle (0.03cm) node {}     node[anchor=south,fill=white,yshift=0.4cm]{};
  \filldraw (-2,0) circle (0.03cm) node {} node[anchor=south,fill=white,yshift=0.6cm]{};
   \filldraw (-2,-0.3) circle (0.03cm) node {}   node[anchor=south,fill=white,yshift=0.8cm]{};
   \filldraw (4,0.3) circle (0.03cm) node {}     node[anchor=south,fill=white,yshift=0.4cm]{};
  \filldraw (4,0) circle (0.03cm) node {} node[anchor=south,fill=white,yshift=0.6cm]{};
   \filldraw (4,-0.3) circle (0.03cm) node {}   node[anchor=south,fill=white,yshift=0.8cm]{};
\end{tikzpicture}
\end{center}
\caption{An eigenvector corresponding to $\theta$.}
\label{fig2}
\end{figure}
In particular, suppose $P$ is an automorphism of $T_{l,m}$ which switches vertices assigned with entries $x_1$ and $x_2$, $y_1$ and $y_2$, and fixing all other vertices. 
Since all eigenvalues except $-1$ and $1$ are simple, the eigenvectors $v$ and $Pv$ are parallel. As a result $Pv=\alpha v$ for some scalar $\alpha$, which further gives $\alpha z_1=z_1$. If $z_1=0$ then \eqref{eqn1} infers that $\theta x_1=y_1$ and $\theta y_1=x_1$, which is absurd as $\theta\neq\pm1.$ Hence $\alpha=1,$ and we have $x_1=x_2$ and $y_1=y_2$. We  therefore conclude $x_1=x_j$, $y_1=y_j$, $u_1=u_k$ and $w_1=w_k$ for all $j$ and $k$.

In case $l=m,$ consider an automorphism $P'$ which switches vertices assigned with entries $z_1$ and $z_2$. 
Since $v$ and $P'v$ are parallel, $P'v=\beta v$ for some scalar $\beta$. This gives
$z_2=\beta z_1$, $z_1=\beta z_2$, $u_1=\beta x_1$ and $w_1=\beta y_1$. Consequently, we have $\beta=\pm 1$.
 The eigenvector $v$ corresponding to the case $\beta=1$ is given in Figure \ref{fig3}. For $\beta=-1,$ the eigenvector can be obtained similarly.
\begin{figure}
\begin{center}
\begin{tikzpicture}[scale=0.7]
   \node[circle,draw,inner sep=1pt] (A) at (0,0){$z_1$};
  \node[circle,draw,inner sep=1pt] (B) at (2,0){$z_1$};
  \draw (A) -- (B);
  \node[circle,draw,inner sep=1pt] (C) at (-1,1){$y_1$};
  \node[circle,draw,inner sep=1pt] (D) at (-1,-1){$y_1$};
  \draw (A) -- (C);
  \draw (A) -- (D);
  \node[circle,draw,inner sep=1pt] (E) at (-2,2){$x_1$};
  \node[circle,draw,inner sep=1pt] (F) at (-2,-2){$x_1$};
  \draw (C) -- (E);
  \draw (D) -- (F);
  \node[circle,draw,inner sep=1pt] (G) at (3,1){$y_1$};
  \node[circle,draw,inner sep=1pt] (H) at (3,-1){$y_1$};
  \draw (B) -- (G);
  \draw (B) -- (H);
  \node[circle,draw,inner sep=1pt] (I) at (4,2){$x_1$};
  \node[circle,draw,inner sep=1pt] (J) at (4,-2){$x_1$};
  \draw (G) -- (I);
  \draw (H) -- (J);
  \filldraw (-2,0.3) circle (0.03cm) node {}     node[anchor=south,fill=white,yshift=0.4cm]{};
  \filldraw (-2,0) circle (0.03cm) node [left] {($l$ branches)}  node[anchor=south,fill=white,yshift=0.6cm]{};
   \filldraw (-2,-0.3) circle (0.03cm) node {}   node[anchor=south,fill=white,yshift=0.8cm]{};
   \filldraw (4,0.3) circle (0.03cm) node {}     node[anchor=south,fill=white,yshift=0.4cm]{};
  \filldraw (4,0) circle (0.03cm) node [right] {($l$ branches)} node[anchor=south,fill=white,yshift=0.6cm]{};
   \filldraw (4,-0.3) circle (0.03cm) node {}   node[anchor=south,fill=white,yshift=0.8cm]{};
\end{tikzpicture}
\end{center}
\caption{The eigenvector $v$ for $\beta=1.$}
\label{fig3}
\end{figure}
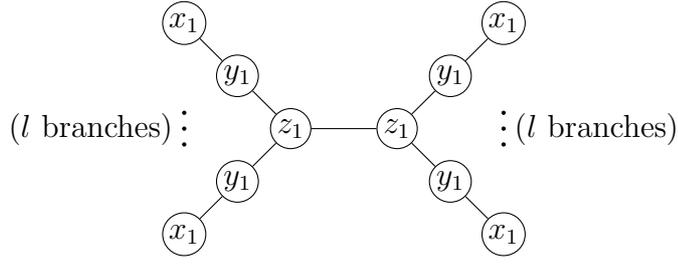
The following result shows that the double subdivided star $T_{l,m}$ does not exhibit PST for any positive integers $l~\text{and}~m.$ 
\begin{thm}\label{t2}
    There is no perfect state transfer in the double subdivided star $T_{l,m}$ for any positive integers $l$ and $m.$
\end{thm}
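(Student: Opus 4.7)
The plan is to combine the ratio condition for periodicity (Theorem \ref{p6}) with the Galois-theoretic independence of eigenvalues established in Section \ref{s2}. Since PST from a vertex $v$ at some time $\tau$ forces $T_{l,m}$ to be periodic at $v$ with period $2\tau$, it suffices to rule out periodicity at every vertex.

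First I would pin down the eigenvalue support $\sigma_v$ of each vertex. Using \eqref{eqn1} on an eigenvector for $\theta\in\{\pm\theta_1,\pm\theta_2,\pm\theta_3\}$, with coordinates labelled as in Figure \ref{fig2} and the reductions $x_j=x_1$, $y_j=y_1$, $u_k=u_1$, $w_k=w_1$ already justified in the excerpt, the pendant and degree-two equations give $x_1=z_1/(\theta^2-1)$ and $y_1=\theta x_1$, and then the equation at $a$ yields
$$z_2=\theta z_1\cdot\frac{\theta^2-1-l}{\theta^2-1},$$
with the symmetric formula on the $b$-side. A direct expansion shows $Q(l+1)=-l^2$ and $Q(m+1)=-m^2$, both nonzero, so the factors $\theta^2-1-l$ and $\theta^2-1-m$ do not vanish on any root of $Q$. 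Hence $z_1=0$ would propagate to $x_1=y_1=z_2=u_1=w_1=0$ and kill the eigenvector, so all six coordinates are nonzero, and therefore $\{\pm\theta_1,\pm\theta_2,\pm\theta_3\}\subseteq\sigma_v$ for every vertex $v$ of $T_{l,m}$.

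Next, suppose for contradiction that $T_{l,m}$ admits PST from some vertex $v$. Then $v$ is periodic, and since $\pm\theta_1,\pm\theta_2\in\sigma_v$, Theorem \ref{p6} forces
$$\frac{\theta_1}{\theta_2}=\frac{\theta_1-(-\theta_1)}{\theta_2-(-\theta_2)}\in\mathbb{Q},$$
which gives a nontrivial $\mathbb{Q}$-linear dependence $\theta_1=r\theta_2$ with $r\in\mathbb{Q}\setminus\{0\}$. If $q(x)$ is irreducible, Theorem \ref{t9} contradicts this at once, as $\{1,\theta_1,\theta_2,\theta_3\}$ is $\mathbb{Q}$-linearly independent. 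If $q(x)$ is reducible, Lemma \ref{lemm5} together with $\theta_1+\theta_2+\theta_3\neq 0$ gives the independence of $\{1,\theta_1,\theta_2\}$, which again excludes $\theta_1=r\theta_2$. In either case we reach a contradiction, and the theorem follows.

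The main obstacle I expect is the support computation in the first step: without confirming that every vertex (in particular the coalescence vertex $b$ and the pendant vertices) sees both $\pm\theta_1$ and $\pm\theta_2$, the ratio argument cannot be invoked. The clean identities $Q(l+1)=-l^2$ and $Q(m+1)=-m^2$ are the technical ingredients that make the support argument uniform across all vertices; once they are established the remainder is an immediate application of Theorem \ref{p6} combined with the independence results of Section \ref{s2}.
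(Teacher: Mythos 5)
Your proposal is correct and follows essentially the same route as the paper: PST forces periodicity, periodicity forces $\theta_1/\theta_2\in\mathbb{Q}$ via Theorem \ref{p6}, and this contradicts Lemma \ref{lemm5} or Theorem \ref{t9} according as $q(x)$ is reducible or irreducible. The only difference is that you explicitly verify (via $Q(l+1)=-l^2$ and $Q(m+1)=-m^2$) that every vertex has all of $\pm\theta_1,\pm\theta_2,\pm\theta_3$ in its eigenvalue support, a point the paper asserts as evident and handles by symmetry among the vertex orbits; your computation is a valid and slightly more complete justification of that step.
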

\begin{proof}
Recall that the spectrum of $T_{l,m}$ is 
 \[(-1)^{l+m-2},~1^{l+m-2},~\pm\theta_1,~\pm\theta_2,~\pm\theta_3.\]
It is evident that the nonzero eigenvalues $\pm\theta_1, \pm\theta_2, \pm\theta_3$ are in the eigenvalue support of the vertices $a,c~\text{and}~e$ as in Figure \ref{fig1}. Suppose $T_{l,m}$ is periodic at vertex $a$. Then the ratio condition in Theorem \ref{p6} gives
 \[\dfrac{\theta_1-(-\theta_1)}{\theta_2-(-\theta_2)}=\dfrac{\theta_1}{\theta_2}\in\mathbb{Q},\]
  which is a contradiction to Lemma \ref{lemm5} or Theorem \ref{t9} depending on whether the polynomial $q(x)=x^6-(l+m+3)x^4+(lm+l+m+3)x^2-1$ is reducible or irreducible over $\mathbb{Q}$. Therefore $T_{l,m}$ is not periodic at $a,$ and hence there is no PST from the vertex $a$. Similarly, there is no PST from the vertices $c$ and $e$ as well. Hence the result follows.
 \end{proof}
Whenever $l=m=1,$ the double subdivided star $T_{l,m}$ becomes the path $P_6$. The fact that $P_6$ does not exhibit PST was previously observed in \cite{god1} as well. Next we investigate the existence of PGST in $T_{l,m}.$
\subsection{Pretty good state transfer}
 The graph $T_{l,m}$ becomes the path $P_6$ for $l=m=1$. The existence of PGST in $P_6$ is mentioned in \cite{god4}, where we find that there is PGST from all vertices of $P_6$. Now we consider the remaining cases. It follows from Lemma \ref{l4} that if there is PGST between a pair of vertices then they have the same degree. It is well known that if there is PGST between two vertices then each automorphism fixing one must fix the other. Therefore there is no pretty good state transfer in $T_{l,m}$ whenever $l$ and $m$ are distinct and $l\neq 2\neq m.$
Next we investigate the existence of PGST in $T_{l,l}.$ 
\begin{thm}\label{th1}
 There exists pretty good state transfer between the coalescence vertices in $T_{l,l}$ with respect to a sequence in $(4\mathbb{Z}-1)\dfrac{\pi}{2}$ for all natural number $l.$  
\end{thm}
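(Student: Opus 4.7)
The plan is to use the eigenvector analysis preceding the theorem to establish strong cospectrality of $a$ and $b$, to rewrite the PGST condition as a simultaneous approximation on the unit circle, and then to dispatch it with Kronecker's theorem via the linear independence of $1,\theta_1,\theta_2$ furnished by Lemma \ref{lemm5}. To start, I would pin down the spectral picture at $a$ and $b$. Substituting the symmetric template of Figure \ref{fig3} ($\beta=1$) into \eqref{eqn1} and eliminating the coordinates $x_1$ and $y_1$ produces the cubic $\theta^3-\theta^2-(l+1)\theta+1=0$, which is $f(\theta;l,l)=0$, so these eigenvalues are exactly $\theta_1,\theta_2,\theta_3$; the antisymmetric template ($\beta=-1$) analogously yields $\theta^3+\theta^2-(l+1)\theta-1=0$ whose roots are $-\theta_1,-\theta_2,-\theta_3$. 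Combined with the earlier observation that $\pm 1\notin\sigma_a\cup\sigma_b$, this identifies $\sigma_a=\sigma_b=\{\pm\theta_1,\pm\theta_2,\pm\theta_3\}$ and produces the sign pattern
\[E_{\theta_i}\mathbf{e}_a=E_{\theta_i}\mathbf{e}_b,\qquad E_{-\theta_i}\mathbf{e}_a=-E_{-\theta_i}\mathbf{e}_b,\qquad i=1,2,3,\]
so that $a$ and $b$ are strongly cospectral, as demanded by Lemma \ref{l4}.

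Next I would reduce PGST to a concrete approximation. Expanding $U(\tau_k)\mathbf{e}_a\to\gamma\mathbf{e}_b$ in the spectral basis and using the sign pattern above, the limit is equivalent to $e^{i\tau_k\theta_i}\to\gamma$ together with $-e^{-i\tau_k\theta_i}\to\gamma$ for each $i$; multiplying these forces $\gamma^2=-1$, and choosing $\gamma=i$ collapses the problem to the single requirement $e^{i\tau_k\theta_i}\to i$ for $i=1,2,3$. The relation $\theta_1+\theta_2+\theta_3=1$ from \eqref{e10} then suggests restricting to $\tau_k=(4N_k-1)\pi/2$: for then $e^{i\tau_k}=-i$, and
\[e^{i\tau_k\theta_3}=e^{i\tau_k}\bigl(e^{i\tau_k\theta_1}e^{i\tau_k\theta_2}\bigr)^{-1}\to(-i)(i\cdot i)^{-1}=i\]
is forced as soon as the $j=1,2$ cases are secured. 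This is precisely the reason the required sequence lies in $(4\mathbb{Z}-1)\pi/2$.

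Finally I would invoke Kronecker's theorem. With $\tau_k=(4N_k-1)\pi/2$, each remaining condition $e^{i\tau_k\theta_j}\to i$ for $j=1,2$ is equivalent to the simultaneous approximation $N_k\theta_j-M_{j,k}\to(\theta_j+1)/4$ for some integers $M_{j,k}$. Since $q(x)=-f(x;l,l)f(-x;l,l)$ is reducible when $l=m$, Lemma \ref{lemm5} guarantees that $1,\theta_1,\theta_2$ are linearly independent over $\mathbb{Q}$, so Theorem \ref{t1} applied with the algebraic real targets $\alpha_j=(\theta_j+1)/4$ supplies, for every $\epsilon>0$, integers $N$ and $M_1,M_2$ with $|N\theta_j-M_j-(\theta_j+1)/4|<\epsilon$; letting $\epsilon_k\downarrow 0$ yields the desired sequence $\tau_k\in(4\mathbb{Z}-1)\pi/2$. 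I expect the principal subtlety to be the book-keeping in the reduction step, specifically verifying that the arithmetic-progression constraint on $\tau_k$ is exactly the one dictated by the target $\gamma=i$ through the linear relation $\theta_1+\theta_2+\theta_3=1$; once that compatibility is in place, Kronecker's theorem supplies the sequence without further difficulty.
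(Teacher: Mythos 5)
Your proposal is correct and follows essentially the same route as the paper: identify $\sigma_a=\sigma_b=\{\pm\theta_1,\pm\theta_2,\pm\theta_3\}$ via the symmetric/antisymmetric eigenvector templates, apply Kronecker's theorem (Theorem \ref{t1}) with targets $\alpha_j=(1+\theta_j)/4$ using the linear independence of $1,\theta_1,\theta_2$ from Lemma \ref{lemm5}, and recover the $\theta_3$ condition from $\theta_1+\theta_2+\theta_3=1$ to land in $(4\mathbb{Z}-1)\frac{\pi}{2}$. The only cosmetic difference is that you argue at the level of spectral projections to get $U(\tau_k)\e_a\to i\e_b$ directly, whereas the paper computes the scalar $\e_a^TU(\tau_k)\e_b$ and normalizes with $U(0)=I$.
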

\begin{proof}
The eigenvalue support of the coalescence vertices $a$ and $b$ in $T_{l,l}$ are \[\sigma_a=\{\pm\theta_1,\pm\theta_2,\pm\theta_3\}=\sigma_b.\] Recall that the eigenvalues $\pm\theta_1,\pm\theta_2,\pm\theta_3$ are simple. Suppose $v_1,v_2,v_3$ are the eigenvectors corresponding to  $\theta_1,\theta_2,\theta_3$, respectively. Using Proposition $\ref{p3}$, we determine the eigenvectors corresponding to $-\theta_1,-\theta_2,-\theta_3$ as well. Therefore
\begin{eqnarray}
\e_a^TU(t)\e_b&=&\sum_{\theta\in \sigma_a}\exp{(it\theta)}\e_a^TE_{\theta}\e_b\nonumber \\&=& \sum_{j=1}^{3}  \left[\exp{(it\theta_j)}\dfrac{v_j(a)v_j(b)}{||v_j||^2}-\exp{(-it\theta_j)}\dfrac{v_j(a)v_j(b)}{||v_j||^2}\right] 
\end{eqnarray}
We already showed that $v_j(a)=v_j(b)\neq 0.$ Without loss of generality, let $v_j(a)=1$ for $j=1,2,3.$
Thus the above equation yields
\begin{eqnarray}\label{eqn17}
\e_a^TU(t)\e_b&=&  \sum_{j=1}^{3} \left[\dfrac{\exp{(it\theta_j)}-\exp{(-it\theta_j)}}{||v_j||^2}\right]
\end{eqnarray}
 By Lemma \ref{lemm5}, the algebraic numbers $1,\theta_1,\theta_2$ are  linearly independent over $\mathbb{Q}.$ Let $\epsilon>0,$ and consider $\alpha_j=\dfrac{1+\theta_j}{4}$ in Theorem \ref{t1}. Then there exist $q,p_1,p_2\in\mathbb{Z}$ such that
\begin{equation}\label{e17}
\left|(4q-1)\dfrac{\pi}{2}\theta_j-\left(2\pi p_j
+\dfrac{\pi}{2}\right)\right|<2\pi\epsilon ~~\text{for}~~ j=1,2.
\end{equation}
Since $\theta_1+\theta_2+\theta_3= 1$ as in \eqref{e10}, this further yields  \begin{equation}\label{e18}\left|(4q-1)\dfrac{\pi}{2}\theta_3-\left(\pi(2(q-p_1-p_2)-1) -\dfrac{\pi}{2}\right)\right|<4\pi\epsilon.
\end{equation}
We obtain a sequence $\tau_k\in(4\mathbb{Z}-1)\dfrac{\pi}{2}$ from \eqref{e17} and \eqref{e18} such that $\displaystyle\lim_{k\to\infty} \exp{(i\tau_k\theta_j)}=i$ for all $j.$ Thus \eqref{eqn17} together with the fact that $U(0)=I$ gives 
\begin{equation*}\label{e19}\lim_{k\to \infty}\e_a^TU(\tau_k)\e_b=2i\left[\dfrac{1}{||v_1||^2}+\dfrac{1}{||v_2||^2}+\dfrac{1}{||v_3||^2}\right]=i.
\end{equation*}
This completes the proof.
\end{proof}
In case of $P_6,$ the support of each vertex contains all its eigenvalues. The proof of Theorem \ref{th1} can be devised to determine that $P_6$ exhibits PGST between the pair of vertices $n$ and $7-n$ for all $n=1,2,3.$

Now we investigate the existence of PGST in $T_{2,m}$ for some positive integer $m.$ Note that there is no PGST between the coalescence vertices of $T_{2,m}$ whenever $m\neq 2.$ Therefore, if $T_{2,m}$ admits PGST then it occurs between the pair of vertices $c, d$ or the pair of vertices $e, f$ as in Figure \ref{fig4}. Here we find that 
\[\sigma_c=\sigma_d=\sigma_e=\sigma_f=\{\pm 1,\pm\theta_1,\pm\theta_2,\pm\theta_3\},\]
which is the set of all eigenvalues of $T_{2,m}$. Let $v_1, v_2, v_3$ be the  eigenvectors corresponding to the simple eigenvalues $\theta_1, \theta_2, \theta_3$, respectively. Then Proposition $\ref{p3}$ gives the eigenvectors corresponding to $-\theta_1, -\theta_2, -\theta_3$ as well. Recall that $E_{-1}$ and $E_1$ are the idempotents corresponding to eigenvalues $-1$ and $1,$ respectively. Using Gram-Schmidt  procedure on the set of linearly independent eigenvectors corresponding to $-1$ and $1$, we evaluate
\[ \e_c^TE_{-1}\e_ d=\e_c^TE_{1}\e_ d=-\dfrac{1}{4}=\e_e^TE_{-1}\e_ f=\e_e^TE_{1}\e_ f.\]
Now we have
\begin{eqnarray*}
\e_c^TU(t)\e_d 
&=&\sum_{j=1}^{3}\left[\exp{(it\theta_j)}\dfrac{v_j(c)v_j(d)}{||v_j||^2}+\exp{(-it\theta_j)}\dfrac{v_j(c)v_j(d)}{||v_j||^2}\right]\\
&&+\exp{(it)}\left(-\dfrac{1}{4}\right)+\exp{(-it)}\left(-\dfrac{1}{4}\right).
\end{eqnarray*}

We already obtained $v_j(c)=v_j(d)\neq 0.$ Without loss of generality, let $v_j(c)=1$ for $j=1,2,3.$ Therefore
\begin{eqnarray}\label{eq26}
\e_c^TU(t)\e_d =\sum_{j=1}^{3}\left[\dfrac{\exp{(it\theta_j)}+\exp{(-it\theta_j)}}{||v_j||^2}\right]+\dfrac{1
}{4}\left[\exp{(i(t+\pi))}+\exp{(-i(t-\pi))}\right].
\end{eqnarray}
Similarly, we use a different set of eigenvectors $v_1, v_2, v_3$ satisfying $v_j(e)=v_j(f)=1$ for all $j$ to obtain 
\begin{eqnarray}\label{eqn21}
\e_e^TU(t)\e_f =\sum_{j=1}^{3}\left[\dfrac{\exp{(it\theta_j)}+\exp{(-it\theta_j)}}{||v_j||^2}\right]+\dfrac{1
}{4}\left[\exp{(i(t+\pi))}+\exp{(-i(t-\pi))}\right].
\end{eqnarray}
In case $l=2$ and $m$ is any positive integer, the polynomial $q(x)$ for the graph $T_{2,m}$ becomes $q(x)=x^6-(m+5)x^4+(3m+5)x^2-1.$ Next we classify the existence of PGST in $T_{2,m}.$  
\begin{figure}
\begin{center}
\begin{tikzpicture}[scale=.7]
		\draw[black](0,0)--(2,0)-- (3,1)--(4,2);
		\draw[black](0,0)-- (-1,1)-- (-2,2);
        \draw[black] (0,0)--(-1,-1)--(-2,-2);
        \draw[black](2,0)-- (3,0.5)-- (4,1);
		\draw[black] (2,0) -- (3,-1) -- (4,-2);
	\filldraw (0,0) circle (0.1cm) node (A)  {}     node[anchor=south,fill=white,yshift=0.24cm]{};
		\filldraw (2,0) circle (0.1cm) node (B) {}     node[anchor=south,fill=white,yshift=0.2cm]{};
		\filldraw (-1,1) circle (0.1cm) node {}     node[anchor=south,fill=white,yshift=0.25cm]{$e$};
		\filldraw (-1,-1) circle (0.1cm) node  {}     node[anchor=north,fill=white,yshift=-0.25cm]{$f$};
		\filldraw (-2,2) circle (0.1cm) node  {}     node[anchor=south,fill=white,yshift=0.1cm]{$c$};
		\filldraw (-2,-2) circle (0.1cm) node  {}     node[anchor=north,fill=white,yshift=-0.10cm]{$d$};
		\filldraw (3,1) circle (0.1cm) node {}     node[anchor=south,fill=white,yshift=0.3cm]{};
  \filldraw (3,0.5) circle (0.1cm) node {}     node[anchor=south,fill=white,yshift=1cm]{};
  \filldraw (4,1) circle (0.1cm) node {}     node[anchor=south,fill=white,yshift=0.1cm]{};
   \filldraw (4,-0.25) circle (0.03cm) node {}     node[anchor=south,fill=white,yshift=0.4cm]{};
  \filldraw (4,-0.55) circle (0.03cm) node [right] {($m$ branches)}     node[anchor=south,fill=white,yshift=0.6cm]{};
   \filldraw (4,-0.85) circle (0.03cm) node {}     node[anchor=south,fill=white,yshift=0.9cm]{};
		\filldraw (3,-1) circle (0.1cm) node  {}     node[anchor=north,fill=white,yshift=-0.35cm]{};
		\filldraw (4,2) circle (0.1cm) node  {}     node[anchor=south,fill=white,yshift=0.1cm]{};
		\filldraw (4,-2) circle (0.1cm) node  {}     node[anchor=north,fill=white,yshift=-0.15cm]{};
\end{tikzpicture}	
\end{center}
 \caption{The double subdivided star $T_{2,m}.$~~~~~~~~~~~~~~}
 \label{fig4}
 \end{figure}
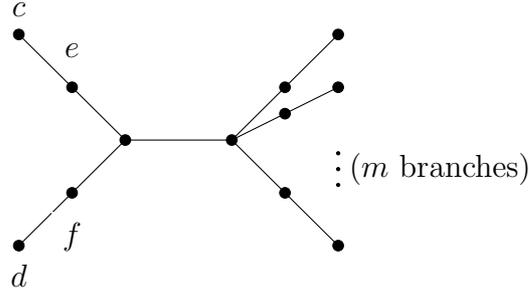
\begin{thm}\label{t8}
Let $m$ be a positive integer. Suppose $\pm\theta_1, \pm\theta_2, \pm\theta_3$ are the roots of the polynomial $q(x)=x^6-(m+5)x^4+(3m+5)x^2-1$ 
in its splitting field over $\mathbb{Q}.$ Then the following holds in $T_{2,m}$.
\begin{enumerate}
\item If $q(x)$ is irreducible over $\mathbb{Q}$, then there is pretty good state transfer with respect to a sequence in $(2\mathbb{Z}+1)\pi$ between both pair of vertices $c,d$ and $e,f$. 
 \item Let $q(x)$ is reducible over $\mathbb{Q}.$ If $\theta_1+\theta_2+\theta_3$ is an even integer, then there is pretty good state transfer with respect to a sequence in $(2\mathbb{Z}+1)\pi$ between both pair of vertices $c,d$ and $e,f$. Moreover, if $\theta_1+\theta_2+\theta_3$ is an odd integer then there is no pretty good state transfer from $c,d,e$ and $f.$
 \end{enumerate}
\end{thm}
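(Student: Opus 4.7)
The plan is to reduce PGST on $T_{2,m}$ to a simultaneous Diophantine condition on the positive eigenvalues $\theta_1,\theta_2,\theta_3$ and then invoke the linear-independence results of Section~\ref{s2} together with Theorem~\ref{t1}. Setting $t=0$ in \eqref{eq26} and using $\e_c^T\e_d=0$ yields the normalization $\sum_{j=1}^{3}\tfrac{2}{\|v_j\|^2}=\tfrac{1}{2}$. Combined with the triangle inequality, \eqref{eq26} then gives $|\e_c^T U(t)\e_d|\le 1$ for all $t$, with equality in the limit along a sequence $\tau_k$ possible only when some sign $\varepsilon\in\{-1,+1\}$ satisfies $\cos(\tau_k\theta_j)\to\varepsilon$ for every $j=1,2,3$ and simultaneously $\cos\tau_k\to-\varepsilon$. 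Since $U(t)$ is unitary, $|\e_c^T U(\tau_k)\e_d|\to 1$ is in turn equivalent to PGST between $c$ and $d$, and the identical analysis applies to $\e_e^T U(t)\e_f$ via \eqref{eqn21}. It thus suffices to decide when there exist $\tau_k\in(2\mathbb{Z}+1)\pi+o(1)$ with $\tau_k\theta_j\in 2\pi\mathbb{Z}+o(1)$ for all $j$.

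For part~(1), Theorem~\ref{t9} makes $1,\theta_1,\theta_2,\theta_3$ linearly independent over $\mathbb{Q}$; applying Theorem~\ref{t1} with $\alpha_j=\theta_j/2$ produces integers $q_k,p_{k,j}$ satisfying $|(2q_k-1)\theta_j/2-p_{k,j}|<\epsilon_k\downarrow 0$, and then $\tau_k=(2q_k-1)\pi$ meets both approximation conditions. For part~(2a), let $s:=\theta_1+\theta_2+\theta_3$; Lemma~\ref{lemm5} supplies linear independence of $1,\theta_1,\theta_2$, so the same Kronecker step handles $j=1,2$, and the identity $\theta_3=s-\theta_1-\theta_2$ combined with $s\in 2\mathbb{Z}$ makes $(2q_k-1)s/2$ an integer, so $(2q_k-1)\theta_3/2$ lies within $2\epsilon_k$ of an integer automatically.

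The main obstacle is part~(2b). Suppose $s$ is odd and, for contradiction, PGST between $c$ and $d$ holds along some $\tau_k$ with $\varepsilon=+1$ (the case $\varepsilon=-1$ is symmetric). Writing $\tau_k=(2n_k+1)\pi+\eta_k$ with $\eta_k\to 0$, the condition $\cos(\tau_k\theta_j)\to 1$ forces $(2n_k+1)\theta_j$ to approach an even integer for each $j$. However
\[
(2n_k+1)\theta_3 \;=\; (2n_k+1)s \;-\; (2n_k+1)\theta_1 \;-\; (2n_k+1)\theta_2,
\]
and $(2n_k+1)s$ is odd (product of two odd integers) while the remaining two terms approach even integers, so $(2n_k+1)\theta_3$ approaches an odd integer---contradicting the even-integer requirement since the gap is at least $1$. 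The same reasoning rules out PGST between $e$ and $f$. PGST from $c,d,e,f$ to any other vertex is excluded because strong cospectrality (Lemma~\ref{l4}) forces matching degree, and for $m\ge 2$ the subgroup of $\mathrm{Aut}(T_{2,m})$ permuting branches on $b$'s side fixes each of $c,d,e,f$ while moving every vertex on $b$'s side. The case $m=1$ does not appear in part~(2b), since direct inspection shows $x^6-6x^4+8x^2-1$ to be irreducible over $\mathbb{Q}$.
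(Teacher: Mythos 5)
Your proposal is correct and follows essentially the same route as the paper: the same expansion \eqref{eq26}--\eqref{eqn21}, Kronecker's theorem applied with $\alpha_j=\pm\theta_j/2$ under the linear independence supplied by Theorem \ref{t9} or Lemma \ref{lemm5}, and the parity of $\theta_1+\theta_2+\theta_3$ as the obstruction in the reducible odd case. The only (harmless) differences are presentational: you derive the requirement that all phases converge to a common $\pm1$ from the triangle inequality and the normalization $U(0)=I$ rather than citing the bipartite PGST result of \cite{god4}, and you fold the reduction to the candidate pairs $(c,d)$, $(e,f)$ (plus the observation that $m=1$ yields irreducible $q$) into the proof, which the paper handles in the discussion preceding the theorem.
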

\begin{proof} 
Suppose $q(x)$ is irreducible. Then by Theorem \ref{t9}, the algebraic numbers $1, \theta_1, \theta_2, \theta_3$ are linearly independent over $\mathbb{Q}.$ Let $\epsilon>0$ and consider $\alpha_j=-\dfrac{\theta_j}{2},$ for $j=1,2,3.$ By Theorem \ref{t1}, there exist $q,p_1,p_2\in\mathbb{Z}$ such that 
\begin{equation}
\left|(2q+1)\pi\theta_j-2\pi p_j\right|<2\pi\epsilon~\text{ for}~j=1,2,3.
\end{equation}
This along with \eqref{eq26} gives a sequence $\tau_k\in(2\mathbb{Z}+1)\pi$ such that 
\[\displaystyle\lim_{k\to \infty}\e_c^TU(\tau_k)\e_d=2\left[\dfrac{1}{||v_1||^2}
+\dfrac{1}{||v_2||^2}+\dfrac{1}{||v_3||^2}+\dfrac{1}{4}\right]=1,\]
since $U(0)=I.$ Therefore, PGST occurs between the pair of vertices $c$ and $d$ with respect to the sequence $\tau_k\in(2\mathbb{Z}+1)\pi.$ Similarly using \eqref{eqn21}, we find that $T_{2,m}$ exhibits PGST between the pair of vertices $e$ and $f$ with respect to the same sequence $\tau_k$ as well.

Suppose $q(x)$ is reducible and $\theta_1+\theta_2+\theta_3=2n$ for some $n\in\mathbb{Z}$. Using Lemma \ref{lemm5}, the algebraic numbers $1,\theta_1,\theta_2$ are linearly independent over $\mathbb{Q}$. Let $\epsilon>0$ and consider $\alpha_j=-\dfrac{\theta_j}{2}$ whenever $j=1,2.$ By Theorem \ref{t1}, there exist $q,p_1,p_2\in\mathbb{Z}$ such that 
\begin{equation}\label{e36}
	\left|(2q+1)\pi\theta_j-2\pi p_j\right|<2\pi\epsilon,~\text{ for}~j=1,2.
\end{equation}
This further yields
\begin{equation}\label{e38}
	 \left|(2q+1)\pi\theta_3-2\pi(2qn+n-p_1-p_2)\right|<4\pi\epsilon.
\end{equation}
Using \eqref{e36} and \eqref{e38}, we obtain a sequence  $\tau_k\in(2\mathbb{Z}+1)\pi$ such that $\displaystyle\lim_{k\to\infty} \exp{(i\tau_k\theta_j)}=1,$ for $j=1,2,3.$
Using \eqref{eq26}, we have 
\[\displaystyle\lim_{k\to\infty}e_c^TU(\tau_k)e_d=2\left[\dfrac{1}{||v_1||^2}+\dfrac{1}{||v_2||^2}+\dfrac{1}{||v_3||^2}+\dfrac{1}{4}\right]=1.\]
 Hence $T_{2,m}$ exhibits PGST between the pair of vertices $c$ and $d$ with respect to the sequence $\tau_k\in(2\mathbb{Z}+1)\pi$. Similarly using \eqref{eqn21} we find that $T_{2,m}$ exhibits PGST between the pair of vertices $e$ and $f$ with respect to the same sequence $\tau_k$ as well.

Finally, consider the case that $q(x)$ is reducible and $\theta_1+\theta_2+\theta_3=2n+1$ for some $n\in\mathbb{Z}.$
In the proof of the main result in \cite{god4}, one can observe that if there is PGST in a bipartite graph between a pair of vertices $a$ and $b$ with $\displaystyle\lim_{k\to\infty}U(\tau_k)\e_a=\gamma\e_b,~\text{for some}~ \tau_k\in\mathbb{R} \text{ and }\gamma\in \mathbb{C}$ then $\gamma=\pm 1$ whenever $a$ and $b$ are in the same partite set, otherwise $\gamma=\pm i$. Since $U(0)=I$, we conclude from \eqref{eq26} that if there is PGST between $c$ and $d$, then we have a sequence $\tau_k\in\mathbb{R}$ such that for all $j=1,2,3,$\[\displaystyle\lim_{k\to\infty}\exp{(i(\tau_k+\pi))}=\displaystyle\lim_{k\to\infty}\exp{(i\tau_k\theta_j)}=\pm 1.\] 
In case $\displaystyle\lim_{k\to\infty}\exp{(i(\tau_k+\pi))}=1,$ it follows that $\tau_k\in(2\mathbb{Z}+1)\pi.$ Since $\theta_1+\theta_2+\theta_3=2n+1,$ we have a contradiction that
$-1=\displaystyle\lim_{k\to\infty}\exp{\left[i\tau_k\left(\theta_1+\theta_2+\theta_3\right)\right]}=1,$
where the equality on the right is obtained by using the property of exponentials. When $\displaystyle\lim_{k\to\infty}\exp{(i(\tau_k+\pi))}=-1,$ we have $\tau_k\in 2\pi\mathbb{Z},$ and again it leads to a contradiction. Hence there is no PGST between the vertices $c$ and $d.$ Using \eqref{eqn21} and a similar argument, we conclude that there is no PGST between the vertices $e$ and $f$ as well.  
\end{proof}
 Considering $\alpha_j=\frac{1}{2}$ in the proof of Theorem \ref{t8} where $q(x)$ is irreducible, one can deduce that there is PGST in $T_{2,m}$ with respect to a sequence in $2\pi\mathbb{Z}$ between the same pair of vertices.
Combining Theorem \ref{th1} and Theorem \ref{t8}, one obtains a complete characterization of double subdivided stars $T_{l,m}$ exhibiting PGST. The method presented in this paper may be devised to classify family of graphs exhibiting other such quantum transportation phenomena, such as quantum fractional revival, pretty good fractional revival, etc.


\section*{Acknowledgements}
The authors are indebted to the reviewers for the valuable comments and generous suggestions to improve the manuscript.
S. Mohapatra is supported by Department of Science and Technology (INSPIRE: IF210209). H. Pal is funded by Science and Engineering Research Board (Project: SRG/2021/000522). 



\bibliographystyle{abbrv}
\bibliography{references}

\end{document}